\newcommand{\R}{\mathbb{R}}
\renewcommand{\P}{\mathbf{P}}
\newcommand{\E}{\mathbf{E}}
\newcommand{\di}{\mathrm{d}}
\newcommand{\wt}{\widetilde}
\newcommand{\wh}{\widehat}
\newcommand{\tr}{\mathrm{tr}}
\newcommand{\eps}{\epsilon}
\newcommand{\excessrisk}{\mathcal{E}}
\newcommand{\cond}{|}
\newcommand{\gaussdist}{\mathcal{N}}
\newcommand{\ie}{\textit{i.e.}\@\xspace} 
\newcommand{\eg}{e.g.\@\xspace}
\newcommand{\iid}{i.i.d.\@\xspace}
\DeclareMathOperator{\trace}{Tr}
\newtheorem{theorem}{Theorem}
\newtheorem{lemma}{Lemma}
\newtheorem{corollary}{Corollary}
\newtheorem{proposition}{Proposition}
\newtheorem{remark}{Remark}
\let\@fnsymbol\@arabic
\title{Asymptotics of Ridge (less) Regression under General Source Condition}
\author{Dominic Richards\footnote{Department of Statistics,   University of Oxford, 24-29 St Giles', Oxford, OX1 3LB}
  \qquad
  Jaouad Mourtada\footnote{Department of Statistics, ENSAE, CREST, IP Paris, Palaiseau, France}
  \qquad
  Lorenzo Rosasco\footnote{MaLGa Center, Universitá degli Studi di Genova, Genova, Italy}\ %
  \footnote{Istituto Italiano di Tecnologia, Via Morego, 30, Genoa 16163, Italy}\ \footnote{Massachusetts Institute of Technology, Cambridge, MA 02139, USA}
}
\date{\today}
\begin{document}

\maketitle

\begin{abstract}
  We analyze the prediction error of ridge regression in an asymptotic regime where
  the sample size and dimension
  go to infinity at a proportional rate.
  In particular, we consider the role played by the structure of the true regression parameter.
  We observe that the case of a general deterministic parameter can be reduced to the case of a random parameter from a structured prior.
  The latter assumption is
  a natural adaptation of classic smoothness assumptions in nonparametric regression, which are known as source conditions in the the context of regularization theory for inverse problems. 
  Roughly speaking, we assume the large coefficients of the parameter are in correspondence to the principal components. 
  In this setting a precise characterisation of the test error is obtained, depending on the inputs covariance and regression parameter structure.
  We illustrate this characterisation in a simplified setting to investigate the influence of the true parameter on optimal regularisation for overparameterized models.
  We show that interpolation (no regularisation) can be optimal even with bounded signal-to-noise ratio (SNR), provided that the parameter coefficients are larger on high-variance directions of the data, corresponding to a more regular function than posited by the regularization term.
  This contrasts with previous work considering ridge regression with isotropic prior, in which case interpolation is only optimal in the limit of infinite SNR. 
\end{abstract}

\section{Introduction}

Understanding the generalisation properties of overparameterized model is a key question in machine learning, recently popularized by the study of neural  networks with millions and even  billions of parameters.
%
These models perform well in practice despite perfectly fitting (interpolating) the data, a property that seems at odds with classical statistical theory \cite{zhang2016understanding}.
This observation has lead to the investigation of the generalisation performance of methods that achieve zero training error (interpolators) \cite{liang2018just,belkin2018overfitting,belkin2019contradict,belkin2018understand,belkin2019two} and, in the context of linear least squares, the unique least norm solution to which gradient descent converges \cite{hastie2019surprises,bartlett2020benign,mitra2019understanding,belkin2019two,ghorbani2019linearized,muthukumar2020harmless,gerbelot2020asymptotic,nakkiran2020optimal}. 
Overparameterized linear models, where the number of variables exceed the number of points, 
are arguably the simplest and most natural setting where interpolation can be studied. 
Moreover, in some specific regimes,
neural networks can be approximated by suitable linear models 
\cite{jacot2018neural,du2018gradienta,du2018gradientb,allen2018convergence,chizat2019lazy}.

The learning curve (test error versus model capacity) for interpolators has been shown to  possibly exhibit a characteristic ``Double Descent'' \cite{advani2017high,belkin2019reconciling} shape, where the test error decreases after peaking at an ``interpolating'' threshold, that is, the model capacity required to interpolate the data. The regime beyond this threshold naturally captures the settings of neural networks \cite{zhang2016understanding}, and thus, has motivated its investigation \cite{mei2019generalization,spigler2019jamming,nakkiran2020optimal}. Indeed, for least squares regression, sharp characterisations  double descent  have been obtained for the least norm interpolating solution in the case of isotropic or auto-regressive covariates \cite{hastie2019surprises,belkin2019two} and random features \cite{mei2019generalization}.

For least squares regression the structure of the features and data can naturally influence performance. Within kernel regression (or inverse problems), for instance, it is often assumed that the parameter of interest is regular with respect to a given basis so as to to ensure a well-posed problem \cite{engl1996regularization,mathe2003geometry,bauer2007regularization}. Meanwhile for neural networks, inductive biases can be encoded in the network architecture \eg convolution layers for image classification \cite{lecun1989backpropagation,lecun1998gradient}. In each case, the problem is made easier by leveraging (through model design) that data encountered in practice exhibits lower dimensional structure owing to, for example, a set of simple physical laws governing the data generation. In contrast, the least squares models investigated beyond the interpolation threshold have focused on cases where the true regression parameter is isotropic 
\cite{dobriban2018high,hastie2019surprises}, which is a single instance in the range possible of alignements between the parameter and population covariance.
This has left open the natural questions of whether additional structure within the data generating distribution can be responsible for determining when interpolating is optimal.

In this work we investigate the performance of ridge regression, and its ridgeless limit, in a high dimensional asymptotic regime with a non-isotropic parameter.
We show that one can naturally reduce to a parameter sampled from a prior that, in short, encodes how the signal strength is distributed across the principal components of the covariates.
This structure has long been recognized as relevant in the statistics literature \cite{jolliffe1982note}, and is analogous to standard smoothness condition used within kernel regression and inverse problems, see e.g. \cite{engl1996regularization,mathe2003geometry,bauer2007regularization}. 

Specifically, a prior function encodes the parameter's norm when it is projected onto eigenspaces of the covariates population covariance.
Thus, it represents how aligned the ground truth is to the principle components in the data.
When considering the expected test error of ridge regression, this assumption can then encode \emph{any} deterministic parameter (Proposition \ref{prop:reduction-source-condition}).
Following the classic name in inverse problems, we call these assumptions {\em source conditions}.
 
Given this assumption, we then study the test error of ridge regression in a high-dimensional asymptotic regime when the number of samples and ambient dimension go to infinity in proportion to one another.  The limits of resulting quantities are then characterised by utilising tools from asymptotic Random Matrix Theory \cite{bai2010spectral,ledoit2011eigenvectors,dobriban2018high,hastie2019surprises}, with results specifically developed to characterise the influence of the prior function. This provides a 
natural and intuitive framework for studying the limiting test error of ridge regression,  characterised by the signal to noise ratio, regularisation, overparmeterisation, and now, the structure of the regression parameter as encoded by the source condition.

We then illustrate our general framework and results in a simplified setting that  highlights the role of model misspecification and its effect on prediction error and regularisation.
Specifically, we consider a population covariance with two types of eigenvectors: \emph{strong features}, associated with a common large eigenvalue (hence favored by the ridge estimator), as well as \emph{weak features}, with a common smaller eigenvalue.
This model is an idealization of a realistic structure for distributions, with some parts of the signal (associated for instance to high smoothness, or low-frequency components) easier to estimate than other, higher-frequency components.
The use of source conditions allows to study situations where the true coefficients
are either more or less aligned with the principal components,
than implicitly postulated by the ridge estimator,
a form of model misspecification which affects predictive performance.
This encodes the difficulty of the problem, and allows to distinguish between ``easy'' and ``hard'' learning problems.   
We now summarise this work's primary contributions.
\\
\begin{itemize}[leftmargin = *]
\item \textbf{Asymptotic prediction error under general source condition.} An asymptotic characterisation of the test error under a general source condition on the regression parameter is provided. This required characterizing the limit of certain trace quantities, and provides a natural framework for investigating the performance of ridge regression. (Theorem \ref{main:thm})
\item \textbf{Interpolating can be optimal even in noisy cases.}
  In the overparameterised regime, we show that interpolation can lead to smaller risk than any positive choice of the regularisation parameter. This occurs in the favorable situation where the regression parameter is larger in high-variance directions of the data, and the signal-to-noise ratio is large enough (but finite).
  Previously, for least squares regression with isotropic prior, the optimal regularisation choice was zero only in the limit of infinite signal to noise ratio \cite{dicker2016ridge,dobriban2018high}. (Section \ref{sec:TwoBulks:ZeroRegOpt})
\end{itemize}
Our analysis of the strong and weak features model also provides asymptotic characterisations of a number of phenomena recently observed within the literature. That is, augmenting the data by adding noisy co-ordinates performs implicit regularisation and can recover the performance of optimally tuned regression restricted to the strong features \cite{kobak2018implicit}. Also, we show an additional peak occurring in the learning curve beyond the interpolation threshold for the ridgeless bias and variance \cite{nakkiran2020optimal}. These insights are presented in Sections \ref{sec:TwoBulks:NoisyBulkReg} and  \ref{sec:TwoBulks:Ridgeless:Variance}, respectively.  

The remainder of this work is organized as follows. Section \ref{sec:RelatedLit} covers the related literature. Section \ref{sec:SetupandThm} describes the setting, and provides the general theorem. Section \ref{sec:TwoBulksModel} formally introduces the strong and weak features model, and presents the aforementioned insights. Section \ref{sec:conc} gives the conclusion. 

\subsection{Related Literature}
\label{sec:RelatedLit}
Due to the large number of works investigating interpolating methods as well as double descent, we next  focus on works that consider the asymptotic regime. 

\paragraph{High-Dimensional Statistics.}
Random matrix theory has found numerous applications in high-dimensional statistics \cite{yao2015sample,elkaroui2018random}.
In particular, asymptotic random matrix theory has been leveraged to study the predictive performance of ridge regression under a well-specified linear model with an isotropic prior on the parameter, for identity population covariance \cite{karoui2011geometric,karoui2013asymptotic,dicker2016ridge,tulino2004random} and then general population covariance~\cite{dobriban2018high}.  More recently, \cite{mahdaviyeh2019asymptotic} considered the limiting test error of the least norm predictor under the spiked covariance model \cite{johnstone2001distribution} where both a subset of eigenvalues and the ratio of dimension to samples diverge to infinity.
They show the bias is bounded by the norm of the ground truth projected on the eigenvectors associated to the subset of large eigenvalues.
In contrast, our work follows standard assumption in kernel regression or inverse problems literature \cite{engl1996regularization,mathe2003geometry,bauer2007regularization}, by adding structural assumptions on the parameter through the variation of its coefficients along the covariance basis.
Finally, we note the works \cite{liao2019inner,liao2019large}
that utilise tools from random matrix theory to characterise the prediction performance of linear estimators in the context of classification.

\paragraph{Double Descent for Least Squares.}
While interpolating predictors (which perfectly fit training data), are classically expected to be sensitive to noise and exhibit poor out-of-sample performance, empirical observations about the behaviour of artificial neural networks \cite{zhang2016understanding} challenged this received wisdom.
This surprising phenomenon, where interpolators can generalize, has first been shown for some local averaging estimators~\cite{belkin2019contradict,belkin2018overfitting}, kernel ``ridgeless'' regression \cite{liang2018just}, and linear regression, where \cite{bartlett2020benign} characterised the variance of the ridgeless estimator up to universal constants.
A ``double descent'' phenomenon for interpolating predictors, where test error can decrease past the interpolation threshold, has been suggested by~\cite{belkin2019reconciling}.

This double descent curve has motivated a number of works  established in the context of asymptotic least squares \cite{hastie2019surprises,mei2019generalization,belkin2019two,xu2019number,gerbelot2020asymptotic,muthukumar2020harmless,nakkiran2020optimal}.
The work \cite{hastie2019surprises} considers either isotropic or auto-regressive features, while \cite{louart2018random,mei2019generalization}
consider Random Features constructed from a non-linear functional applied to the product of isotropic covariates and a random matrix. In \cite{hastie2019surprises,mei2019generalization} the data is assumed to be generated with an isotropic ground truth with some model mis-specification.
The works \cite{mitra2019understanding,gerbelot2020asymptotic,muthukumar2020harmless} considers recovery guarantees under sparsity assumptions on the parameter, with \cite{gerbelot2020asymptotic} showing a peak in the test error when the number of samples equals the sparsity of the true predictor.
The work \cite{muthukumar2020harmless} considers recovery properties of interpolators in the non-asymptotic regime. In contrast to these works, we consider structural assumption on the ground truth in terms of the population covariance that directly follow from standard smoothness conditions in the kernel regression/ inverse problem literature. 

The work \cite{nakkiran2020optimal} gave empirical evidence showing additional peaks in the test error can occur beyond the interpolation threshold when the covariance and ground truth parameter are misaligned.
These empirical observations are verified by the theory in this paper.
Along these lines, we also note the concurrent work \cite{chen2020multiple} which shows a variety of different learning curves are possible for interpolating least squares regression when the sample size is fixed and dimension of the problem is varied. 

\paragraph{Concurrent Work.}
We now review independent work, which appeared in parallel to 
or since the first version of this paper.
The works \cite{wu2020optimal, amari2020does} also considers the asymptotic prediction performance of ridge regression with prior assumptions on the parameter.
Similar to us, \cite{wu2020optimal} shows that interpolating is optimal when the parameter is sufficiently ``aligned'' to the population covariance and the signal to noise ratio is large.
Our technical formulations are formally different but related:
they express the alignment between the parameter and the population covariance in terms of the projections of $\beta$ on the eigenvectors of $\Sigma$, whereas we encode it through the source function $\Phi$; the correspondence between the two formulations is obtained through Proposition~\ref{prop:reduction-source-condition}.
They also include additional study of the sign of optimal ridge penalty.
Meanwhile, \cite{hastie2019surprises} has been recently updated to include refined non-asymptotic results that build upon both our work and \cite{wu2020optimal},
also accounting for the structure of the regression parameter along principal directions.
They derived a general non-asymptotic bound, controlling the difference between the finite-sample risk and its high-dimensional limit.

\section{Dense Regression with General Source Condition
}
\label{sec:SetupandThm}
In this section we formally introduce the setting as well as the main theorem. Section \ref{sec:setup:data} introduces the linear regression setting. Section \ref{sec:ReductionSourceCondition} shows the prior assumption we consider can encapsulate a general ground truth predictor. 
Section \ref{sec:setup:RMT} introduces the functionals that arise from asymptotic random matrix theory. Section \ref{sec:MainTheorem} presents the main theorem. 

\subsection{Problem Setting}
\label{sec:setup:data}

We start by introducing the linear regression setting and the general source condition.

\paragraph{Linear Regression.}
We consider prediction in a random-design linear regression setting with Gaussian covariates.
Let $\beta^\star \in \R^d$ denote the true regression parameter, $\Sigma \in \R^{d \times d}$ the population covariance, and $\sigma^2 > 0$ the noise variance.
We consider an \iid dataset $\{ (x_i, y_i) \}_{1 \leq i \leq n}$ such that for $i = 1, \dots, n$,
\begin{equation}
  \label{eq:LinearModel}
  y_i = \langle \beta^\star, x_i\rangle + \sigma \eps_i ,
  \qquad x_i \sim \gaussdist (0, \Sigma) ,
\end{equation}
and the noise satisfies $\E [ \eps_i \cond x_i ] = 0$, $\E [ \eps_i^2 \cond x_i ] = 1$. In what follows, let $Y = (y_1,\dots,y_n), \epsilon= (\epsilon_1,\dots,\epsilon_n) \in \R^{n}$, and the design matrix $X \in \R^{n \times d}$.
Given the $n$ samples the objective is to derive an estimator $\beta \in \R^d$ that minimises the error of predicting a new response. For a fixed parameter $\beta^\star$, the test risk is then
$
    R(\beta) 
    = \E[ (\langle x, \beta \rangle - y )^2 ]
    = \|\Sigma^{1/2}(\beta - \beta^{\star}) \|_2^2 
    + \sigma^2
$,
where the expectation is with respect to a new response sampled according to~\eqref{eq:LinearModel}. We consider ridge regression~\cite{hoerl1970ridge,tikhonov1963regularization}, defined for $\lambda > 0$ by 
\begin{equation}
  \label{eq:def-ridge}
  \wh \beta_{\lambda} := \Big( \frac{X^{\top} X}{n} + \lambda I \Big)^{-1} \frac{X^{\top} Y}{n}.
\end{equation}

\paragraph{Source Condition.}
We consider an average-case analysis where the parameter $\beta^{\star}$ is random, sampled with covariance encoded by a \emph{source function} $\Phi: \R^+ \to \R^+$, which describes how coefficients of $\beta^\star$ vary along eigenvectors of $\Sigma$.
Specifically, denote by $\{(\tau_j,v_j)\}_{1\leq j \leq d}$ the eigenvalue-eigenvector pairs of $\Sigma$, ordered so that $\tau_1 \geq \tau_2 \geq \dots \geq \tau_d \geq 0$, and let $\Phi(\Sigma)=  \sum_{i=1}^{d} \Phi(\tau_i) v_i v_i^{\top}$.
For $r  > 0$ 
the parameter $\beta^{\star}$ is such that
\begin{align}
\label{equ:BetaPrior}
\E[\beta^{\star}] = 0, \quad\quad \E[ \beta^{\star}(\beta^{\star})^{\top}] =  \frac{r^2}{d} \Phi(\Sigma).
\end{align}

For estimators linear in $Y$ (such as ridge regression), the expected risk only depends on the first two moments of the prior on $\beta^\star$, hence one can assume a Gaussian prior $\beta^\star \sim \gaussdist (0, r^2\Phi(\Sigma)/d )$.
Under prior~\eqref{equ:BetaPrior}, $\Phi (\Sigma)^{-1/2} \beta^\star$ has isotropic covariance $I / d$, so that $\E \| \Phi (\Sigma)^{-1/2} \beta^\star \|^2 = 1$.
This means that the coordinate $\beta_j := \langle \beta^\star, v_j\rangle$ of $\beta^\star$ in the $j$-th direction has standard deviation $\sqrt{\Phi (\tau_j) / d}$.
We note that, as $d \to \infty$, $\beta^\star$ has a ``dense'' high-dimensional structure, where the number of its components grows with $d$, while their magnitude decreases proportionally.
This prior is an average-case, high-dimensional analogue of the standard \emph{source condition} considered in inverse problems and nonparametric regression~\cite{mathe2003geometry,bauer2007regularization}, which describes the behaviour of coefficients of $\beta^\star$ along the eigenvector basis of $\Sigma$.
In the special case $\Phi (x) = x^{\alpha}$, $\alpha\ge 0$, one has $\E \| \Sigma^{-\alpha/2} \beta^\star \|^2 = r^2$.
For a Gaussian prior,  $\Sigma^{-\alpha/2} \beta^\star \sim \gaussdist (0, r^2 I/d )$, which
is rotation invariant with squared norm distributed as $r^2\chi_d^2 / d$ (converging to $r^2$ as $d \to \infty$), hence ``close'' to the uniform distribution on the sphere of radius $r$. In Section \ref{sec:ReductionSourceCondition} we show, when considering the expected test error, that this source assumption can then encode any deterministic ground truth parameter.

\paragraph{Easy and Hard Problems.}
The case of a constant function $\Phi (x) \equiv 1$ corresponds to an isotropic prior under the Euclidean norm used for regularisation, and has been studied by \cite{dicker2016ridge,dobriban2018high,hastie2019surprises}.
In this case (see Remark~\ref{rem:oracle-estimator} below), properly-tuned ridge regression (in terms of $r^2$) is optimal in terms of average risk.
The influence of $\Phi$ can be understood in terms of the average signal strength in eigen-directions of $\Sigma$.
Specifically, let $v_j$ be an eigenvector of $\Sigma$, with associated eigenvalue $\tau_j$.
Then, given $\beta^\star$, the signal strength in direction $v_j$ (namely, the contribution of this direction to the signal) is $\E_x \langle \langle \beta^\star, v_j\rangle v_j, x \rangle^2 = \tau_j \langle \beta^\star, v_j\rangle^2$, and  its expectation over $\beta^\star$ is $\tau_j \Phi (\tau_j)$.
When $\Phi$ is increasing, strength along direction
$v_j$
decays faster as $\tau_j$ decreases,  than postulated by the ridge regression penalty.
In this sense, the problem is lower-dimensional, and hence ``easier'' than for constant $\Phi$; likewise, a decreasing $\Phi$ is associated to a slower decay of coefficients, and therefore a ``harder'', higher-dimensional problem.
While our results do not require this restriction, it is natural to consider functions $\Phi$ such that $\tau \mapsto \tau \Phi (\tau)$ is non-decreasing, so that principal components (with larger eigenvalue) carry more signal on average; otherwise, the norm used by the ridge estimator favours the wrong directions.
In this respect, the hardest prior is obtained for $\Phi (\tau) = \tau^{-1}$, corresponding to the isotropic prior in the prediction norm induced by $\Sigma$: for this un-informative prior, all directions have same signal strength.
Finally, note that in the standard nonparametric setting of reproducing kernel Hilbert spaces, source conditions are related to smoothness of the regression function~\cite{steinwart2009optimal}.

\begin{remark}[Oracle estimator]
  \label{rem:oracle-estimator}
  The best linear (in $Y$) estimator in terms of average risk can be described explicitly.
  It corresponds to the Bayes-optimal estimator under prior $\gaussdist (0, r^2 \Phi(\Sigma) / d)$ on $\beta^\star$, which writes:
  \begin{equation}
    \label{eq:oracle-estimator}
    \wt \beta
    = \Big( \frac{X^\top X}{n} + \frac{\sigma^2}{r^2} \frac{d}{n} \Phi (\Sigma)^{-1} \Big)^{-1} \frac{X^\top Y}{n}.
  \end{equation}
  This estimator requires knowledge of $\Sigma$ and $r^2\Phi$.
  In the special case of an isotropic prior with $\Phi \equiv 1$, the oracle estimator is the ridge estimator~\eqref{eq:def-ridge} with $\lambda = (\sigma^2 d) / (r^2 n)$.
\end{remark}

\subsection{Reduction  to Source Condition} 
\label{sec:ReductionSourceCondition}

In this section, we show that the source condition introduced in Section~\ref{sec:setup:data} is not restrictive, since the general case reduces to it.
Specifically, the following proposition shows that the expected error for \emph{any} deterministic $\beta^\star \in \R^d$ is equal to the averaged error according to a prior with covariance of the form $\Phi (\Sigma)$ for some function $\Phi = \Phi_{\beta^\star,\Sigma}$ depending on $\beta^\star$ and $\Sigma$. 
\begin{proposition}[Reduction to source condition]
  \label{prop:reduction-source-condition}
  Consider data generated according to \eqref{eq:LinearModel}.
  Let $\beta^\star \in \R^d$, $\beta_j = \langle \beta^\star, v_j\rangle$ for $j=1,\dots,d$ and $\Phi = \Phi_{\beta^\star,\Sigma} : \R^+ \to \R$ be a function such that, for $\tau \in \{ \tau_1, \dots, \tau_d \}$, 
  \begin{equation}
    \label{eq:equivalent-source}
    \Phi (\tau)
    = \frac{d}{|J(\tau)|} \sum_{j \in J (\tau)} \beta_j^2
    \, ,
  \end{equation}
  where $J (\tau) = \{ 1 \leq j \leq d : \tau_j = \tau \}$.
  Let $\Pi$ be a distribution on $\R^d$ such that $\E_{\beta\sim \Pi} [ \beta ] = 0$ and $\E_{\beta\sim \Pi} [ \beta \beta^\top ] = \Phi (\Sigma) / d$.
  Then, we have
  \begin{equation*}
    \E_{X,\eps} \big[  \|\Sigma^{1/2}(\widehat{\beta}_{\lambda} - \beta^{\star})\|_2^2\big]
    \! = \! \E_{\beta \sim \Pi} \E_{X,\eps} \! \big[  \|\Sigma^{1/2}(\widehat{\beta}_{\lambda} - \beta)\|_2^2\big]
.
  \end{equation*}
\end{proposition}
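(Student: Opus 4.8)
The plan is to expand the expected prediction error as a quadratic form in $\beta^\star$ (respectively $\beta$) with a coefficient matrix that depends only on $X,\eps,\Sigma,\lambda$, take expectations over the noise, and then observe that this quadratic form depends on $\beta^\star$ only through the diagonal entries (in the eigenbasis of $\Sigma$) of the rank-one matrix $\beta^\star(\beta^\star)^\top$ — so that replacing $\beta^\star(\beta^\star)^\top$ by any matrix with the same relevant diagonal blocks leaves the error unchanged. First I would write $\wh\beta_\lambda - \beta^\star = (\wh\Sigma + \lambda I)^{-1}(\wh\Sigma + \lambda I)\wh\beta_\lambda - \beta^\star$, where $\wh\Sigma = X^\top X/n$, and use $X^\top Y/n = \wh\Sigma\beta^\star + X^\top(\sigma\eps)/n$ to get
\begin{equation*}
  \wh\beta_\lambda - \beta^\star
  = \big(\wh\Sigma + \lambda I\big)^{-1}\Big(\wh\Sigma\beta^\star + \tfrac{\sigma}{n}X^\top\eps\Big) - \beta^\star
  = -\lambda\big(\wh\Sigma + \lambda I\big)^{-1}\beta^\star + \tfrac{\sigma}{n}\big(\wh\Sigma + \lambda I\big)^{-1}X^\top\eps.
\end{equation*}
Then $\|\Sigma^{1/2}(\wh\beta_\lambda - \beta^\star)\|_2^2$ expands into a ``bias'' term quadratic in $\beta^\star$, a ``variance'' term quadratic in $\eps$, and a cross term linear in both. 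Taking $\E_\eps$ (using $\E[\eps \cond X] = 0$, $\E[\eps\eps^\top \cond X] = I$) kills the cross term and turns the variance term into a trace not involving $\beta^\star$, leaving
\begin{equation*}
  \E_\eps\big[\|\Sigma^{1/2}(\wh\beta_\lambda - \beta^\star)\|_2^2\big]
  = \lambda^2\,(\beta^\star)^\top\big(\wh\Sigma + \lambda I\big)^{-1}\Sigma\big(\wh\Sigma + \lambda I\big)^{-1}\beta^\star + \frac{\sigma^2}{n^2}\,\trace\!\Big[\Sigma\big(\wh\Sigma + \lambda I\big)^{-1}X^\top X\big(\wh\Sigma + \lambda I\big)^{-1}\Big].
\end{equation*}

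Next I would take $\E_X$ of both sides. Writing $M(X) := \lambda^2(\wh\Sigma + \lambda I)^{-1}\Sigma(\wh\Sigma + \lambda I)^{-1}$ and $A := \E_X[M(X)]$, the bias contribution becomes $(\beta^\star)^\top A\,\beta^\star = \trace\big(A\,\beta^\star(\beta^\star)^\top\big)$ and the variance contribution is a constant $c(X,\Sigma,\lambda,\sigma)$ independent of $\beta^\star$. The key structural observation is now about $A$: since the distribution of $X$ is invariant under $X \mapsto XQ$ for any orthogonal $Q$ commuting with $\Sigma$ (because $x_i \sim \gaussdist(0,\Sigma)$ and $Q^\top\Sigma Q = \Sigma$), the matrix $A$ commutes with every such $Q$; in particular $A$ is block-diagonal in the eigenbasis $\{v_j\}$ of $\Sigma$ with respect to the grouping of coordinates by equal eigenvalues, and within each eigenvalue-block of size $|J(\tau)|$ it is a scalar multiple of the identity (by Schur's lemma / averaging over permutations and rotations of that block). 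Consequently $\trace\big(A\,\beta^\star(\beta^\star)^\top\big) = \sum_\tau a(\tau)\sum_{j \in J(\tau)}\beta_j^2$, which depends on $\beta^\star$ only through the block sums $\sum_{j\in J(\tau)}\beta_j^2$.

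Finally I would compare against the averaged-prior side: by the same noise computation, $\E_{\beta\sim\Pi}\E_{X,\eps}[\|\Sigma^{1/2}(\wh\beta_\lambda - \beta)\|_2^2] = \trace\big(A\,\E_{\beta\sim\Pi}[\beta\beta^\top]\big) + c = \trace\big(A\,\Phi(\Sigma)/d\big) + c$. Writing $\Phi(\Sigma)/d = \sum_{j}\Phi(\tau_j)v_jv_j^\top/d$ and using the block structure of $A$ again, $\trace\big(A\,\Phi(\Sigma)/d\big) = \sum_\tau a(\tau)\cdot|J(\tau)|\cdot\Phi(\tau)/d$. Plugging in the definition \eqref{eq:equivalent-source}, $|J(\tau)|\,\Phi(\tau)/d = \sum_{j\in J(\tau)}\beta_j^2$, so the two expressions match term by term, and the additive constants $c$ are identical. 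I expect the only genuinely delicate step to be the justification of the block-scalar structure of $A = \E_X[M(X)]$: one must argue carefully that the rotational symmetry of the Gaussian design within each eigenspace of $\Sigma$ forces $\E_X[M(X)]$ to be a scalar on that eigenspace — everything else is a routine bias–variance expansion. (One could alternatively bypass the symmetry argument entirely: since $\E_\eps[\|\Sigma^{1/2}(\wh\beta_\lambda-\beta^\star)\|^2] - \sigma^2\text{-term}$ already equals a quadratic form $(\beta^\star)^\top M(X)\beta^\star$, taking $\E_X$ gives the bias as $\trace(A\,\beta^\star(\beta^\star)^\top)$ and the claim reduces to showing $\trace(A\, \beta^\star(\beta^\star)^\top) = \trace(A\,\Phi(\Sigma)/d)$ directly from \eqref{eq:equivalent-source}, which still requires knowing $A$ is constant on eigenspaces of $\Sigma$.)
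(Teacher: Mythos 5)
Your proof is correct, and it takes a genuinely different route from the paper, although both ultimately rest on the same underlying invariance of the Gaussian design under rotations within each eigenspace of $\Sigma$. The paper proves a rotation-invariance lemma at the level of the risk functional: for any isometry $U$ acting isometrically on each eigenspace of $\Sigma$, the change of variables $X \mapsto XU^{-1}$ (which preserves the distribution of $X$ and maps the ridge estimator equivariantly) gives $\E_{X,\eps}[\excessrisk_{U\beta}(\wh\beta_\lambda)] = \E_{X,\eps}[\excessrisk_{\beta}(\wh\beta_\lambda)]$. It then defines $\Pi$ as the law of $U\beta^\star$ for a Haar-random $U$, deduces the risk equality by averaging over $U$, computes the covariance of $\Pi$ explicitly to obtain $\Phi(\Sigma)/d$, and concludes by noting the expected excess risk is quadratic in $\beta$ so only the first two moments of the prior matter. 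Your version instead does the bias--variance decomposition first, isolates $A := \E_X\bigl[\lambda^2(\wh\Sigma+\lambda I)^{-1}\Sigma(\wh\Sigma+\lambda I)^{-1}\bigr]$, and uses the same distributional invariance to show $A$ commutes with all block-orthogonal $Q$ commuting with $\Sigma$, hence is block-scalar in the eigenbasis of $\Sigma$ by Schur's lemma; the identity $\trace\bigl(A\,\beta^\star(\beta^\star)^\top\bigr) = \trace\bigl(A\,\Phi(\Sigma)/d\bigr)$ then falls out of the definition of $\Phi$. Both arguments have the same content; yours is more explicit and reads the block-scalar structure of $A$ directly, while the paper's never needs to know what $A$ looks like, only that the risk functional is invariant and quadratic. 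The paper's packaging is slightly more robust in that it applies verbatim to any estimator that is equivariant under orthogonal reparameterisations commuting with $\Sigma$, whereas yours is tied to the explicit form of the ridge bias term. The one step you correctly flag as needing care --- that $A$ vanishes off-diagonal across eigenspaces and is scalar within each --- is handled in the paper by the random-rotation averaging rather than by an appeal to Schur's lemma, which amounts to the same thing; for completeness you should note that taking $Q = -I_{V_j} \oplus I_{V_j^\perp}$ kills the cross-eigenspace blocks, and irreducibility of $O(V_j)$ on $V_j$ gives the scalar structure within each block.
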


The equality in Proposition~\ref{prop:reduction-source-condition} holds for finite samples and deterministic $\beta^\star$ (and $\Sigma$), and provides a reduction to the setting of random $\beta^\star$ used in remaining sections.

On a technical side, the equality in Proposition \ref{prop:reduction-source-condition} holds for the expected test error, while the remaining results within this work align with prior work~\cite{dobriban2018high} where expectation is taken with respect to the parameter and noise only (conditionally on covariates $X$) i.e. $\E_{\epsilon,\beta^{\star}}[R(\wh \beta_{\lambda}) - R(\beta^{\star})] = \E_{\epsilon,\beta^{\star}}[\|\Sigma^{1/2}(\beta - \beta^{\star}) \|_2^2]$.
Note that convergence results on the conditional risk can be integrated under suitable domination assumptions, for instance with positive ridge parameter $\lambda$.
In addition, framing our next convergence results in the context of deterministic $\beta^\star$ would lead to consider source functions $\Phi_{\beta^\star, \Sigma} = \Phi_d$ depending on the dimension $d$, and converging to a fixed function $\Phi$ in a suitable sense as $d \to \infty$.
For the sake of simplicity, we instead work in the setting of random parameter $\beta^\star$ with a fixed source function $\Phi$.

On another note, the generalised ridge estimator, which penalises with respect to a general covariance $\|P\beta\|_2^2$ for a positive definite matrix $P$, reduces after rescaling to standard ridge regression with an appropriate prior and covariate covariance.
Namely, the problem instance with prior, penalisation and covariate covariances $(\Pi,\! P,\! \Sigma)$ is equivalent to using $(P^{1/2} \Pi P^{1/2}, I,  P^{-1/2}\Sigma P^{-1/2})$ with parameterisation $\widetilde{\beta}^{\star}  =  P^{1/2}\beta^{\star}$, $\widetilde{\beta}  =  P^{1/2} \beta$ and $\widetilde{X}  =  X P^{-1/2}$.

\subsection{Random Matrix Theory}
\label{sec:setup:RMT}

Let us now describe the considered asymptotic regime, as well as quantities and notions from random matrix theory that appear in the analysis.

\paragraph{High-Dimensional Asymptotics.}
We study the performance of the ridge estimator $\wh \beta_{\lambda}$ under high-dimensional asymptotics \cite{karoui2011geometric,karoui2013asymptotic,dicker2016ridge,dobriban2018high,tulino2004random,bai2010spectral}, where the number of samples and dimension go to infinity $n,d \rightarrow \infty$ proportionally with $d/n \to \gamma \in (0,\infty)$.
This setting enables precise characterisation of the risk, beyond the classical regime where $n\to \infty$
with fixed true distribution.

The ratio $\gamma = d/ n$ plays a key role.
A value of $\gamma > 1$ corresponds to an overparameterised model, with more parameters than samples.
Some care is required in interpreting this quantity:
indeed, for a fixed sample size $n$, varying $\gamma$ changes $d$ and hence the underlying distribution.
Hence, $\gamma$ should not be interpreted as a degree of overparmeterisation. Rather, it quantifies the sample size relatively to the dimension of the problem.

\paragraph{Random Matrix Theory.}
Following standard assumptions \cite{ledoit2011eigenvectors,dobriban2018high}, assume the spectral distribution of the covariance $\Sigma$ converges almost surely to a probability distribution $H$ supported on $[h_1,h_2]$ for $0 < \! h_1 \! \leq \! h_2 < \! \infty$. Specifically, denoting the cumulative distribution function of the population covariance eigenvalues as $H_{d}(\tau) \! = \! \frac{1}{d} \! \sum_{i=1}^{d} \mathbbm{1}(\tau)_{[\tau_i,\infty)}$, we have $H_d (\tau) \rightarrow H(\tau)$ almost surely as $d \!  \rightarrow \! \infty$.

A key quantity utilised within the analysis is the \emph{Stieltjes Transform} of the empirical spectral distribution, defined for $z \in \mathbb{C}\backslash \R_{+}$ as $\widetilde{m}(z) := d^{-1} \trace\big( \big( \frac{X^{\top}X}{n} - zI  \big)^{-1}\big)$.
Under appropriate assumptions of the covariates $x$ (see for instance \cite{dobriban2018high}) it is known as $n,d\rightarrow \infty$ the  Stieltjes Transform of the empirical covariance $\widetilde{m}(z)$ converges almost surely to a Stieltjes transform $m(z)$ that satisfies the following stationary point equation
\begin{align}
\label{equ:STransform}
    m(z) = \int_{0}^{\infty} \frac{1}{\tau (1 - \gamma(1+z m(z))) - z} \di H(\tau).
\end{align}
For an isotropic covariance $\Sigma = I$ the limiting spectral distribution is a point mass at one, and the above equation can be solved for $m(z)$ where it is the Stieltjes Transform of the Marchenko-Pastur distribution \cite{marvcenko1967distribution}. 
For general spectral densities, the stationary point equation \eqref{equ:STransform} may not be easily solved algebraically, but still yields insights into the limiting properties of quantities that arise. One tool that we will use to gain insights will be the \textit{companion transform} $v(z)$ which is the Stieltjes transform of the limiting spectral distribution of the Gram matrix $n^{-1} X X^{\top}$. It is related to $m(z)$ through the following equality 
$
    \gamma(m(z) + 1/z) = v(z) + 1/z 
    \text{  for all  } 
    z \in \mathbb{C} \backslash \R_{+}.
$
Finally, introduce the $\Phi$-weighted Stieltjes Transform defined for $z \in \mathbb{C} \backslash \R_{+}$
\begin{align*}
    \Theta^{\Phi}(z) : = 
    \int \Phi(\tau) \frac{1}{\tau (1 - \gamma(1+z m(z))) - z} \di H(\tau),
\end{align*}
which is the limit of the trace quantity $d^{-1}\trace\big( \Phi(\Sigma) (\frac{X^{\top} X}{n} - z I )^{-1} \big)$
\cite{ledoit2011eigenvectors}.

\subsection{Main Theorem: Asymptotic Risk under General Source Condition}
\label{sec:MainTheorem}
Let us now state the main theorem of this work, which provides the limit of the ridge regression risk.
\begin{theorem}
\label{main:thm}
Consider the setting described in Section \ref{sec:setup:data} and \ref{sec:setup:RMT}. Suppose $\Phi$ is a real-valued bounded function defined on $[h_1,h_2]$ with finitely many points of discontinuity and let $v^{\prime}(z) = \partial v(z)/\partial z$. If $n,d \rightarrow \infty$ with  $\gamma = d/n  \in (0,\infty)$  then almost surely
$
    \E_{\epsilon,\beta^{\star}}[ R(\wh \beta_{\lambda})-
    R(\beta^{\star})]
     \rightarrow
    R_{\mathrm{Asym}}(\lambda)
$ where 
    \begin{align*}
      R_{\mathrm{Asym}}(\lambda)  =  
    \underbrace{  
    \sigma^2 \Big(  \frac{ v^{\prime}(-\lambda) }{v(-\lambda)^2}  -  1 \Big)}_{\text{Variance}}
     + 
     \underbrace{ 
     r^2\frac{ \Theta^{\Phi}(-\lambda)  +  \lambda \frac{\partial \Theta^{\Phi}(-\lambda)}{\partial \lambda}}{v(-\lambda)^2} 
     }_{\text{Bias}}.
\end{align*}
\end{theorem}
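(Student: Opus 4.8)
The plan is to substitute the closed form of $\wh\beta_\lambda$ into the excess risk $\|\Sigma^{1/2}(\wh\beta_\lambda-\beta^\star)\|_2^2$, take the expectations over $\eps$ and $\beta^\star$ to reduce to two deterministic (given $X$) trace functionals of the resolvent $Q:=(\wh\Sigma+\lambda I)^{-1}$ with $\wh\Sigma:=X^\top X/n$, compute the almost sure limit of each using the random matrix results of Section~\ref{sec:setup:RMT}, and finally rewrite those limits in the stated form via the companion/Silverstein fixed point equations. For the reduction, $Y=X\beta^\star+\sigma\eps$ gives $\wh\beta_\lambda-\beta^\star = -\lambda Q\beta^\star + \tfrac{\sigma}{n}QX^\top\eps$, so using $\E[\eps\cond X]=0$, $\E[\eps\eps^\top\cond X]=I$ and $\E[\beta^\star(\beta^\star)^\top]=r^2\Phi(\Sigma)/d$ the cross term vanishes and
\[
\E_{\eps,\beta^\star}\big[\|\Sigma^{1/2}(\wh\beta_\lambda-\beta^\star)\|_2^2\big]
= \tfrac{\sigma^2}{n}\trace(\wh\Sigma Q\Sigma Q) + \tfrac{r^2\lambda^2}{d}\trace(\Phi(\Sigma)Q\Sigma Q),
\]
so it suffices to identify the almost sure limits of these two traces with the ``Variance'' and ``Bias'' terms.

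The variance term is the more straightforward one. Using $\wh\Sigma Q=I-\lambda Q$ and cyclicity, $\trace(\wh\Sigma Q\Sigma Q)=\trace(\Sigma Q)-\lambda\trace(\Sigma Q^2)=\trace(\Sigma Q)+\lambda\,\partial_\lambda\trace(\Sigma Q)$; after the cyclic rearrangement only the \emph{single}-resolvent statistic $d^{-1}\trace(\Sigma Q)$ and its $\lambda$-derivative survive. By the weighted Stieltjes transform convergence of~\cite{ledoit2011eigenvectors} (applied with the bounded weight $\tau\mapsto\tau$ on $[h_1,h_2]$), $d^{-1}\trace(\Sigma(\wh\Sigma+\lambda I)^{-1})$ converges a.s.\ to $\Theta^{\Phi_0}(-\lambda)$ with $\Phi_0(\tau)=\tau$, and its $\lambda$-derivative converges too since these are analytic and locally uniformly bounded functions on $\mathbb{C}\setminus\R_+$ (Vitali). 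Inserting the identity $1-\gamma(1+zm(z))=-zv(z)$ (equivalent to $\gamma(m+1/z)=v+1/z$) and the fixed point equation for $v$ into $\Theta^{\Phi_0}$ and differentiating in $\lambda$ collapses $\gamma\big(\Theta^{\Phi_0}(-\lambda)+\lambda\,\partial_\lambda\Theta^{\Phi_0}(-\lambda)\big)$ to $v'(-\lambda)/v(-\lambda)^2-1$, which is the claimed variance after multiplying by $\sigma^2$ and $d/n\to\gamma$.

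The bias term is where the real work lies: $\trace(\Phi(\Sigma)Q\Sigma Q)$ genuinely contains two resolvents separated by the weight $\Phi(\Sigma)$, which does not commute with $\wh\Sigma$, so cyclicity no longer collapses it to a single resolvent (it would only if $\Phi$ were affine in $\tau$, as in the variance case). I would obtain its limit by differentiating a perturbed single-resolvent trace, writing $d^{-1}\trace(\Phi(\Sigma)Q\Sigma Q)=-\partial_\mu\big|_{\mu=0}\,d^{-1}\trace\big(\Phi(\Sigma)(\wh\Sigma+\mu\Sigma+\lambda I)^{-1}\big)$ and noting that $\wh\Sigma+\mu\Sigma+\lambda I$ is still an anisotropic sample-covariance resolvent with a deterministic shift commuting with $\Sigma$, so its limit is again accessible through a deterministic-equivalent (weighted-Stieltjes) computation, governed by a scalar fixed point $\kappa(\mu)$ with $\kappa(0)=\lambda v(-\lambda)$; differentiating that fixed point in $\mu$ and unwinding produces $\lim d^{-1}\trace(\Phi(\Sigma)Q\Sigma Q)$ in terms of $\Theta^\Phi(-\lambda)$, $\partial_\lambda\Theta^\Phi(-\lambda)$ and $v(-\lambda)$, with the $\mu$-derivative/limit interchange justified as for $\lambda$. (Equivalently one can use Woodbury, $-\lambda Q\beta^\star=(n^{-1}X^\top G X-I)\beta^\star$ with $G=(n^{-1}XX^\top+\lambda I)^{-1}$, expand the squared $\Sigma$-norm into four traces of $G$ against Gram matrices $n^{-1}Zf(\Sigma)Z^\top$, and invoke the corresponding generalized deterministic equivalents.) Either way, a correction term appears beyond the naive substitution $Q\rightsquigarrow\bar Q=(\lambda v(-\lambda)\Sigma+\lambda I)^{-1}$, and I expect the main obstacle to be the bookkeeping of this correction and checking that, after inserting the fixed point relations, $\lambda^2\cdot\lim d^{-1}\trace(\Phi(\Sigma)Q\Sigma Q)$ collapses exactly to $\big(\Theta^\Phi(-\lambda)+\lambda\,\partial_\lambda\Theta^\Phi(-\lambda)\big)/v(-\lambda)^2$. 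Assembling the two pieces gives $R_{\mathrm{Asym}}(\lambda)$; the positive ridge parameter keeps every resolvent bounded, so the almost sure convergence of these linear and bilinear spectral statistics — and the interchange with the $\eps,\beta^\star$-expectations already carried out — is routine, and the ridgeless case follows by continuity wherever the limit is finite.
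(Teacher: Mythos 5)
Your reduction to the two trace functionals and your handling of the variance term match the paper's calculation: you use $\wh\Sigma Q = I - \lambda Q$ to split it into $\trace(\Sigma Q)$ and $-\lambda\trace(\Sigma Q^2)$, invoke the Ledoit--Péché weighted Stieltjes transform result and Vitali's theorem for the derivative, and then convert from $m$ to $v$ via $1-\gamma(1-\lambda m(-\lambda))=\lambda v(-\lambda)$ and its differentiated form. This is exactly what the paper does (Lemma~\ref{lem:Limits}, parts~\eqref{equ:lem:limits:1}--\eqref{equ:lem:limits:2}).

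For the bias term, however, your route genuinely diverges from the paper's. You propose to differentiate a \emph{perturbed} single-resolvent trace,
\[
d^{-1}\trace\!\big(\Phi(\Sigma)Q\Sigma Q\big)=-\partial_\mu\big|_{\mu=0}\,d^{-1}\trace\!\big(\Phi(\Sigma)(\wh\Sigma+\mu\Sigma+\lambda I)^{-1}\big),
\]
reducing the perturbed resolvent to a standard one by absorbing $A=\mu\Sigma+\lambda I$ into the population covariance ($\Sigma_A = A^{-1/2}\Sigma A^{-1/2}$, whose spectrum now depends on $\mu$), computing its deterministic equivalent, and then implicitly differentiating the fixed-point equation for $\kappa(\mu)=1-\gamma(1-m_A(-1))$. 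This is a correct and fairly general ``linearization'' technique, and the $\mu$-derivative/limit interchange can indeed be justified like the $\lambda$-one (analyticity, uniform boundedness, Vitali). The paper instead avoids ever introducing a $\mu$-parametrized family: starting from the algebraic identity $\tfrac{1}{n}\sum_i \Sigma^{1/2}Z_i^\top Z_i\Sigma^{1/2}R=I+zR$, it multiplies by $\Phi(\Sigma)R$ and traces, then applies Sherman--Morrison (leave-one-out) twice to each summand to obtain, up to vanishing errors, the exact finite-sample relation in equation~\eqref{equ:LimitObject}:
\[
\frac{1}{d}\trace(\Phi(\Sigma)R)-\lambda\frac{1}{d}\trace(\Phi(\Sigma)R^2)=\frac{\frac{1}{d}\trace(\Sigma R\Phi(\Sigma)R)}{\big(1+\frac{1}{n}\trace(\Sigma R)\big)^2}+\delta,
\]
and then solves for the bilinear trace by passing to the limit on both sides using only the single-resolvent limits already established. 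What the paper's approach buys you is that the whole ``bookkeeping'' you flag as the main obstacle --- the implicit differentiation of a $\mu$-dependent fixed point --- is replaced by an exact algebraic identity plus a routine concentration argument that $\delta\to 0$ (Lemma~\ref{lem:Conc} with a union bound and Borel--Cantelli). What your approach would buy is a more systematic recipe that generalizes to other bilinear resolvent statistics without tailoring a Sherman--Morrison identity to each one, but you would still need to establish the deterministic equivalent for the $\mu$-perturbed resolvent (the analogue of the Marchenko--Pastur/Silverstein fixed point for a $\mu$-dependent $H_A$) and carry out the implicit differentiation; as written, that step is outlined rather than executed, and it is where essentially all of the paper's Appendix~\ref{app:proof-lemma-rmt} work would have to go.
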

The above theorem characterises the expected test error of the ridge estimator when the sample size and dimension go to infinity $n,d \rightarrow \infty$ with $d/n = \gamma \in (0,\infty)$, and $\beta^{\star}$ is distributed as \eqref{equ:BetaPrior}.
The asymptotic risk in Theorem \ref{main:thm} is characterised by the relative sample size $\gamma$, the limiting spectral distribution $H$, and the source function $\Phi$ (normalising $\sigma^2 = r^2 =1$).
This provides a general form for studying the asymptotic test error for ridge regression in a dense high-dimensional setting.
The source condition affects the limiting bias; to evaluate it we are required to study the limit of the trace quantity $d^{-1} \trace\big(\Sigma (\frac{X^{\top} X}{n} - zI)^{-1} \Phi(\Sigma)(\frac{X^{\top} X}{n} - zI)^{-1} \big)$, which is achieved utilising techniques from both \cite{chen2011regularized} and \cite{ledoit2011eigenvectors} (key steps in proof of Lemma \ref{lem:Limits} Appendix \ref{app:proof-lemma-rmt}). The variance term in Theorem \ref{main:thm} aligns with that seen previously in \cite{dobriban2018high},  as the structure of $\beta^{\star}$ only influences the bias. 

We now give some examples of asymptotic expected risk in Theorem \ref{main:thm} for $3$ different structures of $\beta^{\star}$, namely $\Phi(x) = 1$ (isotropic), $\Phi(x) = x$ (easier case) and $\Phi(x)= x^{-1}$ (harder case). 
\begin{corollary}
\label{Cor:SpikedCovar}
Consider the setting of Theorem \ref{main:thm}. If $n,d\rightarrow \infty$ with $\gamma = d/n$, then almost surely
\begin{align*}
     \E_{\epsilon,\beta^{\star}}\![ R(\wh \beta_{\lambda})
     -  
    R(\beta^{\star})]
     \rightarrow \!
    \sigma^2 \Big( \! \frac{  v^{\prime}(-\lambda) }{(v(-\lambda))^2}\! - \! 1 \! \Big) 
     \, + 
    r^2 \begin{cases}
    \frac{v^{\prime}(-\lambda) }{\gamma  (v(-\lambda))^4} 
    -  \frac{1}{\gamma  v(-\lambda)^2}
     & \text{if } \Phi(x) = x \\
     \frac{1 }{\gamma v(-\lambda) } \! - \! \frac{\lambda}{\gamma} 
     \frac{v^{\prime}(-\lambda)}{ ( v(-\lambda))^2} 
     & \text{if } \Phi(x) = 1 \\
     \frac{2 \lambda }{\gamma}  \frac{v^{\prime}(-\lambda)}{ v(-\lambda)}
      +  (1 - \frac{1}{\gamma} \big) \frac{v^{\prime}(-\lambda)}{ v(-\lambda)^2}  
     - \frac{1}{\gamma  } 
    &  \text{if } \Phi(x) = 1/x
    \end{cases}
\end{align*}
\end{corollary}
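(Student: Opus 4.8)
The plan is to apply Theorem~\ref{main:thm} directly: the variance term in the corollary is verbatim the same as in the theorem, so the entire task is to evaluate the bias numerator $\Theta^{\Phi}(-\lambda) + \lambda\,\partial_\lambda \Theta^{\Phi}(-\lambda) = \partial_\lambda\big[\lambda\,\Theta^{\Phi}(-\lambda)\big]$ for each $\Phi(x) \in \{x, 1, 1/x\}$ and to rewrite the answer in terms of the companion transform $v$ and its derivative $v'$. Note first that $\Phi(x) = 1/x$ is bounded on $[h_1,h_2]$ since $h_1 > 0$, so Theorem~\ref{main:thm} applies in all three cases.

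The key preliminary step is to record two identities relating $m$ and $v$. Writing $c(z) := 1 - \gamma\big(1 + z m(z)\big)$ for the denominator factor appearing in~\eqref{equ:STransform}, the companion relation $\gamma(m(z) + 1/z) = v(z) + 1/z$ yields $m(z) = v(z)/\gamma - (\gamma-1)/(\gamma z)$ and, after substitution, the clean identity $c(z) = -z\,v(z)$. With these in hand, for each source function I would reduce $\Theta^{\Phi}(z) = \int \Phi(\tau)\,(\tau c(z) - z)^{-1}\,\di H(\tau)$ to an expression in $m(z)$ (equivalently $v(z)$) by a partial-fraction decomposition in $\tau$: for $\Phi(x) = 1$ trivially $\Theta^{1} = m$; for $\Phi(x) = x$ one writes $\tau/(\tau c - z) = c^{-1}\big(\tau c/(\tau c - z)\big)$ and integrates to get $\Theta^{x}(z) = (1 + z m(z))/c(z)$; and for $\Phi(x) = 1/x$ the decomposition $\big(\tau(\tau c - z)\big)^{-1} = z^{-1}\big(c\,(\tau c - z)^{-1} - \tau^{-1}\big)$ gives $\Theta^{1/x}(z) = z^{-1}\big(c(z)\,m(z) - \int \tau^{-1}\,\di H(\tau)\big)$.

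Finally, substituting $c(z) = -z v(z)$ and $m(z) = v(z)/\gamma - (\gamma-1)/(\gamma z)$, each $\lambda\,\Theta^{\Phi}(-\lambda)$ becomes a low-degree polynomial in $v(-\lambda)$ and $\lambda$ (plus, in the $1/x$ case, the additive constant $\int \tau^{-1}\,\di H(\tau)$); differentiating in $\lambda$ with $\partial_\lambda v(-\lambda) = -v'(-\lambda)$ and dividing by $v(-\lambda)^2$ yields the three displayed bias terms. The calculation is routine once $c(z) = -z v(z)$ is established; the only point deserving attention is the $\Phi(x) = 1/x$ case, where $\int \tau^{-1}\,\di H(\tau)$ appears in $\Theta^{1/x}(-\lambda)$ itself but enters $\lambda\,\Theta^{1/x}(-\lambda)$ only as an additive constant and therefore drops out under $\partial_\lambda$, leaving a formula in $v$ and $v'$ alone, as claimed.
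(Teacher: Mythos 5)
Your proposal is correct and follows essentially the same route as the paper's proof: for each $\Phi$ you reduce $\Theta^\Phi$ to $m$ (hence to $v$) via the identical partial-fraction identities, and then the bias numerator falls out by differentiation in $\lambda$. The only differences are organizational — you state the identity $c(z) = -z\,v(z)$ once at the outset (the paper uses it piecemeal via \eqref{equ:STransformToComp}) and you package $\Theta^\Phi(-\lambda) + \lambda\,\partial_\lambda\Theta^\Phi(-\lambda)$ as $\partial_\lambda[\lambda\,\Theta^\Phi(-\lambda)]$, which also makes transparent why $\int\tau^{-1}\,\di H(\tau)$ drops out in the $\Phi(x)=1/x$ case.
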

The three choices of source function $\Phi$ in Corollary \ref{Cor:SpikedCovar} are cases where the asymptotic bias in Theorem \ref{main:thm} can be expressed in terms of the companion transform and its first derivative. The expression in the case $\Phi(x) \! = \! 1$ was previously investigated in \cite{dobriban2018high}, while for $\Phi(x) \! = \! x$ the bias aligns with quantities previously studied in \cite{chen2011regularized}, and thus, can be simply plugged in. For $\Phi(x) \! = \! x^{-1} $, algebraic manipulations similar to the $\Phi(x)\! = \! x$ case allow  $\Theta^{\Phi}(z)$ to be simplified. Finally, for $\Phi(x) \! =  \! 1$ it is clear how the bias and variance can be brought together and simplified yielding optimal regularisation choice $\lambda \! = \! \sigma^2 \gamma/r^2$ \cite{dobriban2018high}, see also Remark~\ref{rem:oracle-estimator}.
As noted in Section~\ref{sec:setup:data}, $\Phi (x) \!=  \! x^{-1}$ corresponds to a ``harder" case, with no favoured direction, while $\Phi (x) \! = \! x$ corresponds to an ``easier'' case with faster coefficient decay.

\section{Strong and Weak Features Model}
\label{sec:TwoBulksModel}

In this section we consider a particular covariance structure, 
the \emph{strong and weak features model}.
Let $U_1 \in \R^{d_1 \times d}$ and $U_2 \in \R^{d_2 \times d}$ be orthonormal matrices whose rows forms an orthonormal basis of $\R^d$ and $d_1 + d_2 = d$. The covariance is then for $\rho_1 \geq \rho_2 > 0$ 
\begin{align}
\label{equ:cov:Bulk}
    \Sigma = \rho_1 U_1^{\top}U_1 + \rho_2 U_2^{\top} U_2. 
\end{align}
We call elements of the span of rows of $U_1$ \emph{strong features}, as they are associated to the dominant eigenvalue $\rho_1$.
Similarly, $U_2$ is associated to the \emph{weak features}.
The size of $U_1,U_2$ go to infinity $d_1,d_2 \! \rightarrow \! \infty$ with the sample size $n \! \rightarrow \! \infty$, with $d_i/d \! \rightarrow \! \psi_i \in (0,1)$ and thus $\psi_1 \! + \! \psi_2 \! = \! 1$.
The limiting population spectral measure is then atomic $dH(\tau) \! = \! \psi_1 \delta_{\rho_1} \! +\! \psi_2 \delta_{\rho_2}$.

The parameter $\beta^{\star}$  has covariance 
$\E[\beta^{\star} (\beta^{\star})^{\top}] = \frac{r^2}{d} \big( \phi_1 U_1^{\top} U_1 \! + \! \phi_2 U_2^{\top} U_2 \big)$, where $\phi_1 ,\phi_2$ are the coefficients for each type of feature and the source condition is $\Phi(x) \! = \! \phi_1 \mathbbm{1}_{x=\rho_1} \! + \! \phi_2 \mathbbm{1}_{x=\rho_2}$. 
The coefficients $\phi_1,\phi_2$ encode the composition of the ground truth in terms of strong and weak features, and thus, the difficulty of the estimation problem.
The case $\phi_1 \! = \! \phi_2$ corresponds to the isotropic prior, while the case $\phi_1 > \phi_2$ corresponds to faster decay and hence an ``easier'' problem.
Specifically, if $\phi_1>\phi_2$ increases, $\beta^\star$ has faster decay, 
the problem becomes ``easier'' since the ground truth is increasingly made of strong features. Therefore, if $\phi_1/\phi_2 \! \geq \! 1$ we say the problem is \emph{easy}, while if $\phi_1/\phi_2 \! < \! 1$ we say the problem is \emph{hard}.

Under the model just introduced, Theorem \ref{main:thm} provides the following asymptotic characterization for the expected test risk as $n,d \rightarrow \infty$
\begin{align*}
R_{\mathrm{Asym}}(\lambda) =  
\frac{  v^{\prime}(-\lambda) }{v(-\lambda)^2}
\Big( \! \sigma^2 \!\! + \! r^2 \sum_{i=1}^{2} \frac{\phi_i \psi_i \rho_i }{ (\rho_i v(-\lambda) \! + \! 1)^2} 
\! \Big)
\! - \!\sigma^2
.
\end{align*}
To gain insights into the performance of least squares when data is generated from the strong and weak features model,
we now investigate the above limit in the overparameterised setting $\gamma > 1$.
The insights are summarised in the following sections.  Section \ref{sec:TwoBulks:ZeroRegOpt} shows that zero regularisation is optimal for easy problems with high signal to noise ratio.
Section \ref{sec:TwoBulks:NoisyBulkReg} shows how weak features can be used as a form of regularisation similar to ridge regression.
Section \ref{sec:TwoBulks:Ridgeless:Variance} present findings related to the ridgeless bias and variance. 

\paragraph{Source Condition Reduction for Strong and Weak Features Model.}
Following Section \ref{sec:ReductionSourceCondition}, the case of a general deterministic parameter $\beta^{\star} \in \mathbb{R}^{d}$ can be encoded by the strong and weak features setting just described. Namely,  let $\beta_1, \beta_2$ be the respective projections of $\beta^\star$ onto  rows of $U_1, U_2$.
Then $\phi_1 = \| \beta_1 \|^2_2 d /d_1 = $ and $\phi_2 = \| \beta_2 \|^2_2 d /d_2$, and thus, the coefficients $\phi_1,\phi_2$ align with the norm of the ground truth $\beta^{\star}$ projected on each bulk.

\subsection{Interpolating can be optimal in the presence of noise
}
\label{sec:TwoBulks:ZeroRegOpt}
In this section, we investigate how the true regression function, namely the parameter $\beta^\star$ (through the source condition), affects optimal ridge regularisation. 
We begin with the following corollary, which, in short, describes when zero regularisation can be optimal. Let us denote the derivative of the asymptotic risk $R_{\mathrm{Asym}}(\lambda)$ with respect to the regularisation $\lambda$ as $R^{\prime}_{\mathrm{Asym}}(\lambda)$. 
\begin{corollary}
\label{Cor:PeciseOptimalReg}
Consider the strong and weak features model with $\gamma = 2$, $\psi_1 = \psi_2 = 1/2$ and $\E[\|\beta^{\star}\|_2^2] = r^2 $. If 
\begin{align*}
 \underbrace{ \frac{r^2}{\sigma^2} \frac{\rho_1 \rho_2}{(\sqrt{\rho_1} + \sqrt{\rho_2})^2} }_{\text{Signal to Noise Ratio}}
\Big( \underbrace{ \frac{\phi_1 \sqrt{\rho_1} + \phi_2 \sqrt{\rho_2}}{\sqrt{\rho_1} + \sqrt{\rho_2}} - 1  }_{\text{Alignment}}\Big)  \geq  1
\end{align*}
then $R_{\mathrm{Asym}}^{\prime}(0) \geq 0$. Otherwise $R_{\mathrm{Asym}}^{\prime}(0) < 0$.
\end{corollary}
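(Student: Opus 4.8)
The starting point is the specialisation of Theorem~\ref{main:thm} to the strong and weak features model (the display preceding the corollary): for $\gamma = 2$, $\psi_1 = \psi_2 = \tfrac12$, and under the normalisation $\E\|\beta^\star\|_2^2 = r^2$ — which forces $\phi_1 + \phi_2 = 2$, since $\E\|\beta^\star\|_2^2 = r^2(\phi_1\psi_1 + \phi_2\psi_2)$ — it reads
\begin{equation*}
  R_{\mathrm{Asym}}(\lambda) = \frac{v'(-\lambda)}{v(-\lambda)^2}\Bigl( \sigma^2 + \frac{r^2}{2}\sum_{i=1}^{2}\frac{\phi_i\rho_i}{(\rho_i v(-\lambda)+1)^2}\Bigr) - \sigma^2 .
\end{equation*}
Everything depends on $\lambda$ only through $w(\lambda) := v(-\lambda)$ and its derivatives (note $v'(-\lambda) = -\di w/\di\lambda$), so the plan is: (i) derive a closed-form self-consistent equation for $w$; (ii) evaluate $w$ and its first two $\lambda$-derivatives at $\lambda = 0$; (iii) differentiate $R_{\mathrm{Asym}}$, set $\lambda = 0$, and simplify.

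For step~(i), combining~\eqref{equ:STransform} with the companion relation $\gamma(m(z)+1/z) = v(z)+1/z$ yields the Silverstein form $-z = -1/v(z) + \gamma\int \tau\,\di H(\tau)/(1+\tau v(z))$, which for $\di H = \tfrac12\delta_{\rho_1}+\tfrac12\delta_{\rho_2}$ and $\gamma = 2$ becomes $1/w = \lambda + \rho_1/(1+\rho_1 w) + \rho_2/(1+\rho_2 w)$. For step~(ii), setting $\lambda = 0$ and clearing denominators collapses this to $\rho_1\rho_2\, w(0)^2 = 1$, hence $w(0) = (\rho_1\rho_2)^{-1/2}$ (the positive root, since $w(0) = v(0) > 0$ in the overparameterised regime $\gamma > 1$), together with the identity $1 + \rho_i w(0) = (\sqrt{\rho_1}+\sqrt{\rho_2})/\sqrt{\rho_{3-i}}$ that will drive all subsequent cancellations. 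Differentiating the self-consistent equation once gives $\di w/\di\lambda = -1/A(\lambda)$ with $A := w^{-2} - \sum_i \rho_i^2(1+\rho_i w)^{-2}$, and differentiating it again gives $\di^2 w/\di\lambda^2 = A'/A^2$ with $A' = 2(\di w/\di\lambda)\bigl(\sum_i\rho_i^3(1+\rho_i w)^{-3} - w^{-3}\bigr)$; at $\lambda = 0$ each of these reduces to an explicit rational function of $\sqrt{\rho_1},\sqrt{\rho_2}$ (for instance $A(0) = 2(\rho_1\rho_2)^{3/2}/(\sqrt{\rho_1}+\sqrt{\rho_2})^2$).

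For step~(iii), writing $R_{\mathrm{Asym}} = -(\di w/\di\lambda)w^{-2}\bigl(\sigma^2 + \tfrac{r^2}{2}\sum_i\phi_i\rho_i(1+\rho_i w)^{-2}\bigr) - \sigma^2$ and applying the product rule expresses $R_{\mathrm{Asym}}'(0)$ through $w(0)$ and its first two derivatives alone. Substituting the quantities from step~(ii) and simplifying — the $(\sqrt{\rho_1}+\sqrt{\rho_2})$ factors cancel, and, invoking $\phi_1 + \phi_2 = 2$, the $\phi$-dependent contribution collapses to a multiple of $(\phi_1-1)(\sqrt{\rho_1}-\sqrt{\rho_2}) = (\sqrt{\rho_1}+\sqrt{\rho_2})\bigl(\tfrac{\phi_1\sqrt{\rho_1}+\phi_2\sqrt{\rho_2}}{\sqrt{\rho_1}+\sqrt{\rho_2}} - 1\bigr)$ — one finds that $R_{\mathrm{Asym}}'(0)$ is a strictly positive multiple (the constant being $(\sqrt{\rho_1}+\sqrt{\rho_2})^4\sigma^2/(4(\rho_1\rho_2)^{3/2})$) of
\begin{equation*}
  \frac{r^2}{\sigma^2}\,\frac{\rho_1\rho_2}{(\sqrt{\rho_1}+\sqrt{\rho_2})^2}\Bigl(\frac{\phi_1\sqrt{\rho_1}+\phi_2\sqrt{\rho_2}}{\sqrt{\rho_1}+\sqrt{\rho_2}} - 1\Bigr) - 1 .
\end{equation*}
Hence $R_{\mathrm{Asym}}'(0) \geq 0$ exactly when this quantity is non-negative — i.e. when the stated signal-to-noise-times-alignment condition holds — and $R_{\mathrm{Asym}}'(0) < 0$ otherwise.

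The only genuine obstacle is computational: step~(iii) really does need the second derivative $\di^2 w/\di\lambda^2$ at $0$, so one must differentiate the Silverstein equation twice and push the ensuing rational expressions through several cancellations; no idea beyond Theorem~\ref{main:thm} and implicit differentiation enters, but one must apply the constraint $\phi_1 + \phi_2 = 2$ coming from $\E\|\beta^\star\|_2^2 = r^2$ at the right moment, since that is precisely what turns the messy $\phi$-dependence into the clean ``alignment'' factor.
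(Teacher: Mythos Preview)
Your proposal is correct and follows essentially the same route as the paper: compute $v(0)=(\rho_1\rho_2)^{-1/2}$ from the Silverstein equation, obtain $v'(0)$ and $v''(0)$ by implicit differentiation, differentiate $R_{\mathrm{Asym}}$, evaluate at $\lambda=0$, and simplify using the normalisation $\phi_1+\phi_2=2$. Your final positive prefactor $(\sqrt{\rho_1}+\sqrt{\rho_2})^4\sigma^2/(4(\rho_1\rho_2)^{3/2})$ agrees with the paper's $(v'(0)/v(0))^2\sqrt{\rho_1\rho_2}\,\sigma^2$, so the conclusion is identical.
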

Corollary \ref{Cor:PeciseOptimalReg} states that if the ground truth is aligned $\phi_1 \! > \! 1$ (the signal concentrates more on strong features) and the signal to noise ratio $r^2/\sigma^2$ is sufficiently large, then the derivative of the asymptotic test error at zero regularisation is positive $R^{\prime}_{\mathrm{Asym}}(0) \! \geq \! 0$. The interpretation being that interpolating is optimal if adding regularisation increases (locally at $0$) the test error. The case $\gamma \! = \! 2$ and $\psi_1 \! = \! \psi_2 \! = \! 1/2$ is considered, as the companion transform at zero takes a simple form  $v(0) \! = \! 1/\sqrt{\rho_1\rho_2}$, allowing the derivatives $v^{\prime}(0)$ and $v^{\prime\prime}(0)$, and thus $R^{\prime}_{\mathrm{Asym}}(0)$, to be tractable. 

Looking to Figure \ref{fig:OptReg2} plots for  the performance of optimally tuned ridge regression (\textit{Left}) and the optimal choice of regularisation parameter (\textit{Right}) against (a monotonic transform) of the eigenvalue ratio $\rho_1/\rho_2$, for a coefficient ratios $\phi_1 \geq \phi_2$ have been given.
As shown in the right plot of Figure~\ref{fig:OptReg2}, for a fixed distribution of $X$ (characterised by $\psi_1, \rho_1, \rho_2$) and sample size (characterised by $\gamma$) as the ratio $\phi_1/\phi_2$ increases the optimal regularisation decreases.
Following Corollary \ref{Cor:PeciseOptimalReg}, if the ratio $\phi_1/\phi_2$ is large enough, the optimal ridge regularisation parameter $\lambda$ can be $0$, corresponding to ridgeless interpolation.
We note that the negative derivative at $0$ (Corollary~\ref{Cor:PeciseOptimalReg}) and the right plot of Figure \ref{fig:OptReg2},
see also \cite{kobak2018implicit,wu2020optimal}.

\begin{figure}[!h]
    \centering
\begin{subfigure}[b]{0.45\textwidth}
        \centering
        \includegraphics[width=\textwidth]{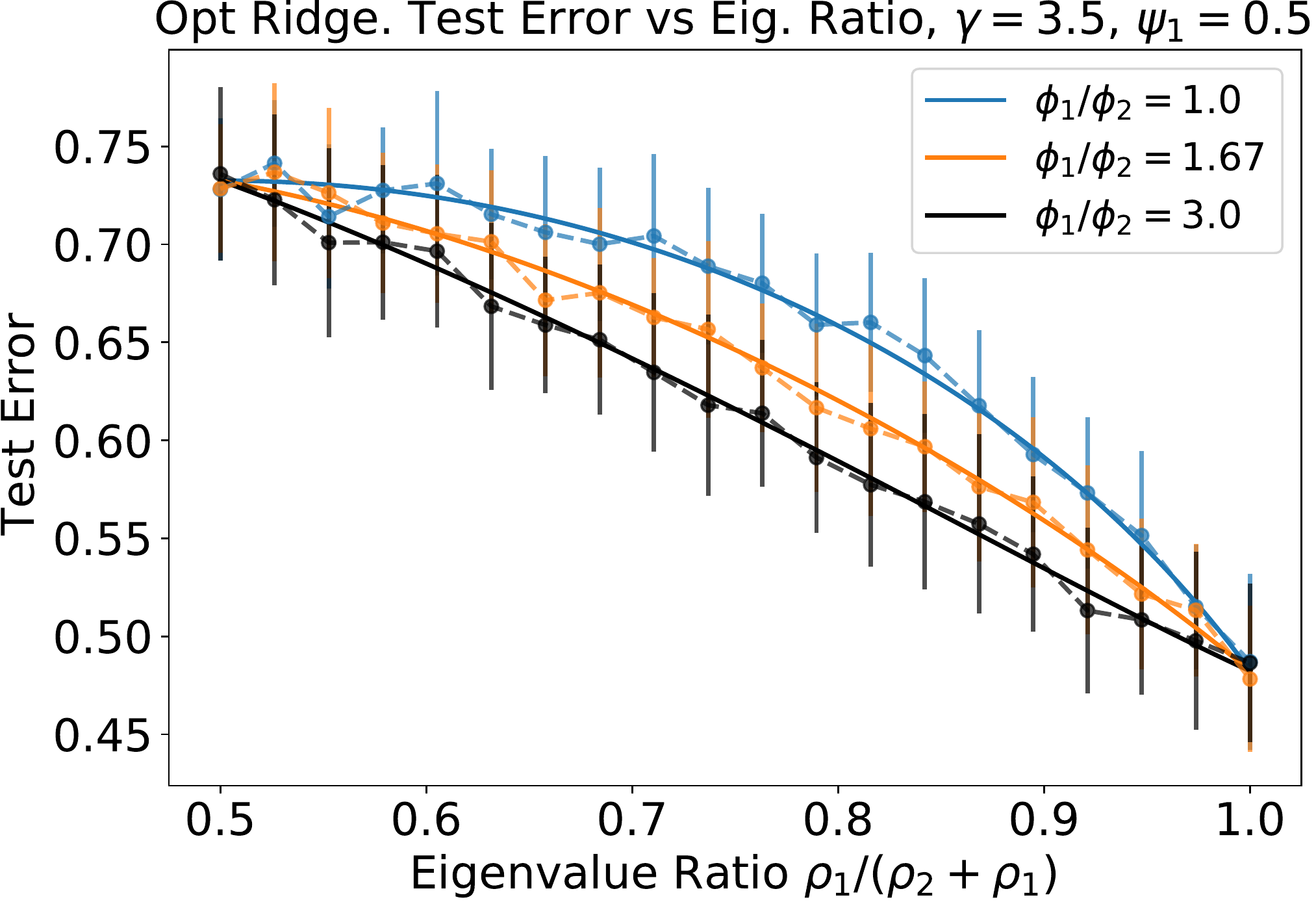}
      \end{subfigure}
    \begin{subfigure}[b]{0.45\textwidth}
      \centering
      \includegraphics[width=\textwidth]{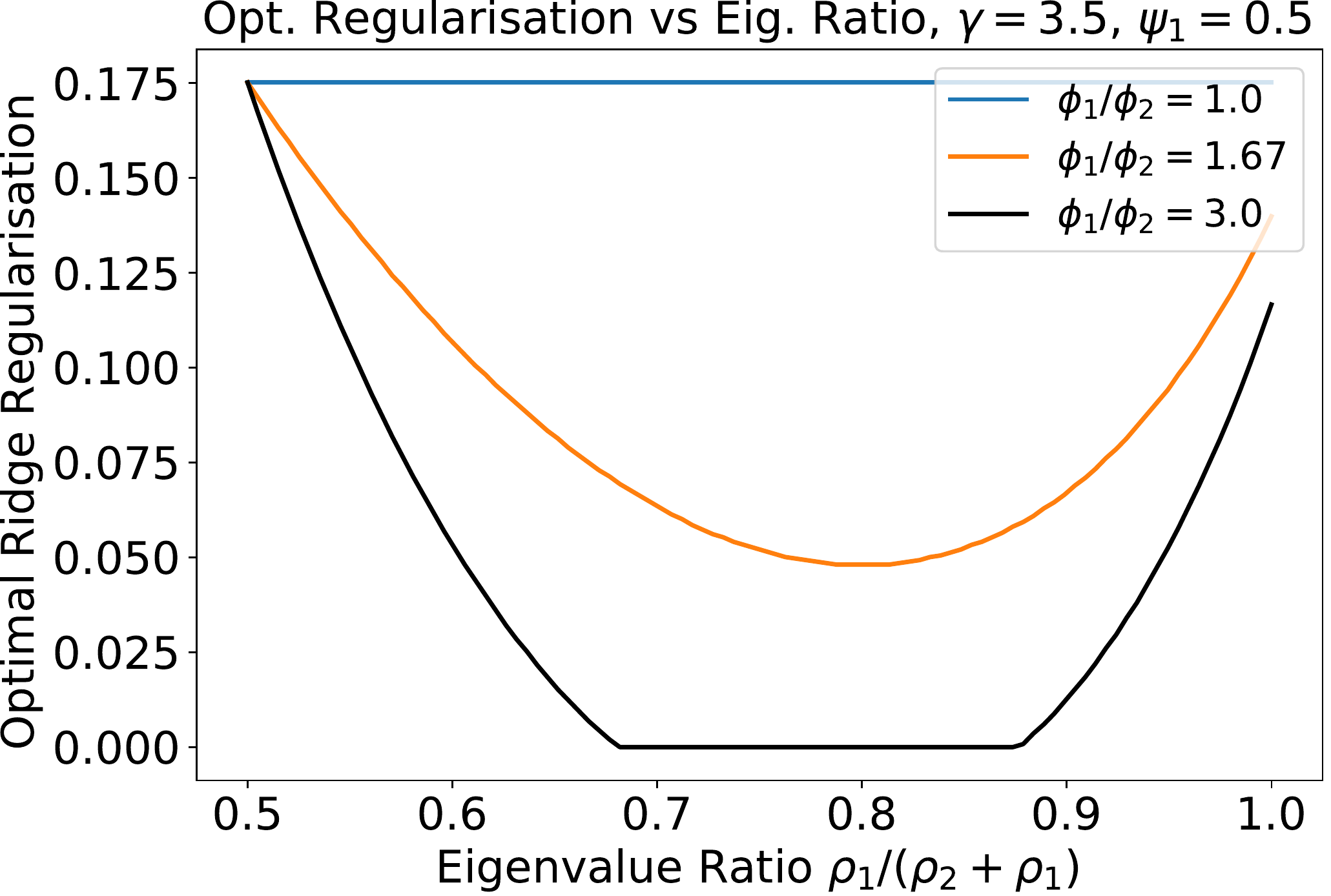}
    \end{subfigure}
    \caption{\textit{Left}: Limiting test error for optimally tuned ridge regression as described by $R_{\mathrm{Asym}}(\lambda)$, \textit{Right}: optimal regularisation computed numerically using theory. \textit{Both}: Quantities plotted against Eigenvalue ratio $\rho_1/(\rho_1+\rho_2)$. Problem parameters were $\E[\langle x,\beta^{\star}\rangle^2] = \rho_1 \phi_1\psi_1 + \rho_2 \phi_2 \psi_2 = 1 $, $\E[\|\beta^{\star}\|_2^2] = r^2(\phi_1 \psi_1 + \phi_2 \psi_2) = r^2 = 1$, $\sigma^2 = 0.05$, $\gamma = 3.5$ and $\psi_1 = 0.5$.  \textit{Left}: Dashed lines indicate simulations with $d=2^{10}$, $40$ replications, noise $\epsilon$ from standard Gaussian and covariance $\Sigma$ diagonal with $\rho_1$ on first $d_1$ co-ordinates and $\rho_2$ on remaining $d_2$.
    } 
\label{fig:OptReg2}
\end{figure}

\paragraph{Comparison with the Isotropic Model.}
In the case of a parameter $\beta^\star$ drawn from an isotropic prior $\Phi \equiv 1$ (see Section~\ref{sec:setup:data}), the optimal ridge parameter is given by $\lambda = (\sigma^2 d)/(r^2 n)$ (see Remark~\ref{rem:oracle-estimator}, as well as \cite{dobriban2018high,hastie2019surprises}).
This parameter is always positive, and is inversely proportional to the signal-to-noise ratio $r^2 / \sigma^2$.
Studying the influence of $\beta^\star$ through a general $\phi_1, \phi_2$ shows that $(1)$ optimal regularisation also depends on the coefficient decay of $\beta^\star$; $(2)$ optimal regularisation can be equal to $\lambda = 0$, which interpolates training data.
Finally, let us note that the optimal estimator of Remark~\ref{rem:oracle-estimator} (with oracle knowledge of $\Sigma, \Phi$) does \emph{not} interpolate; hence, the optimality of interpolation among the family of ridge estimators arises from a form of ``prior misspecification''.
We believe this phenomenon to extend beyond the specific case of ridge estimators.

\subsection{The Special Case of Noisy Weak Features}
\label{sec:TwoBulks:NoisyBulkReg}

In this section we consider the special case where weak features are pure noise variables, namely $\phi_2 = 0$, while their dimension is large. Such noisy weak features can be artificially introduced to the dataset, to induce an overparameterised problem.
We then refer to this technique as \emph{Noisy Feature Regularisation}, and note it corresponds to the design matrix augmentation in \cite{kobak2018implicit}. Looking to Figure \ref{fig:TestError:OverparamReg}, the ridgeless test error is then plotted against the eigenvalue ratio $\rho_2/\rho_1$  (\textit{Left}) and the number of weak features with the tuned eigenvalue ratio (\textit{Right}). 

Observe (right plot) as we increase the number of weak features (as encoded by $1/\psi_1$), and tune the eigenvalue $\rho_2$, the performance converges to optimally tuned ridge regression with the strong features only. The left plot then shows the ``regularisation path'' as a function of the eigenvalue ratio $\rho_2/\rho_1$ for some numbers of weak features $1/\psi_1$. 

\paragraph{Weak Features Can Implicitly Regularise.} The results in Sections \ref{sec:TwoBulks:ZeroRegOpt} and \ref{sec:TwoBulks:NoisyBulkReg} suggest that weak features can implicitly regularise when the ground truth is associated to a subset of stronger features. Specifically,  Section \ref{sec:TwoBulks:ZeroRegOpt} demonstrated how this can occur passively in an easy learning problem, with the weak features providing sufficient stability that zero ridge regularisation can be the optimal choice \footnote{Zero regularisation has been shown to be optimal for Random Feature regression with a high signal to noise ratio \cite{mei2019generalization} and a misspecified component. The work \cite{kobak2018implicit} numerically estimated $R^{\prime}_{\mathrm{Asym}}(\lambda)$  for a spiked covariance model and found it can be positive.
}. Meanwhile, in this section we demonstrated an active approach where weak features can purposely be added to a model and tuned similar to ridge. We note the recent work \cite{jacot2020implicit} which shows a similar implicit regularisation phenomena for kernel regression.

\begin{figure}[!h]
    \centering
\begin{subfigure}[b]{0.45\textwidth}
        \centering
        \includegraphics[width=\textwidth]{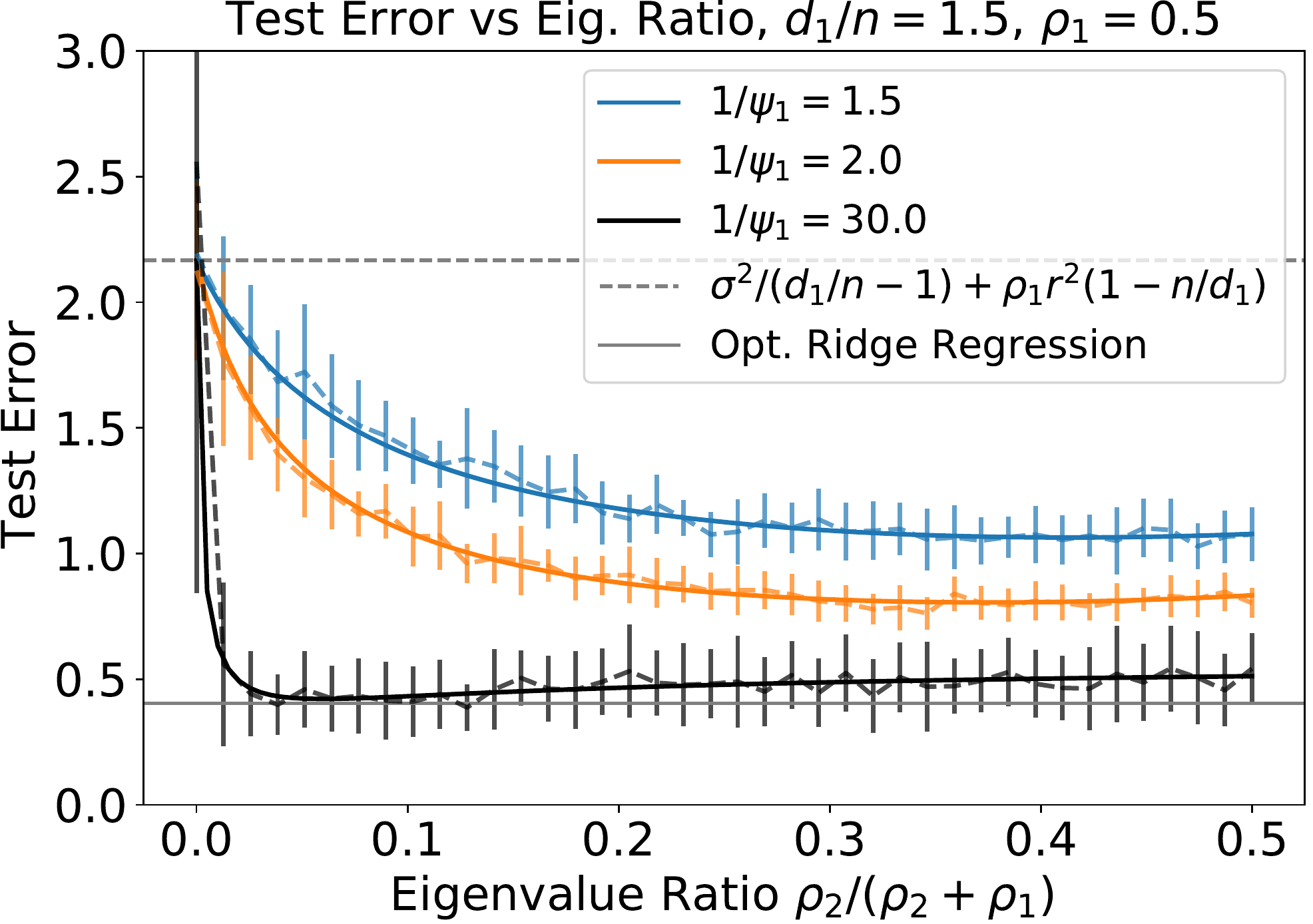}
      \end{subfigure}
    \begin{subfigure}[b]{0.45\textwidth}
        \centering
        \includegraphics[width=\textwidth]{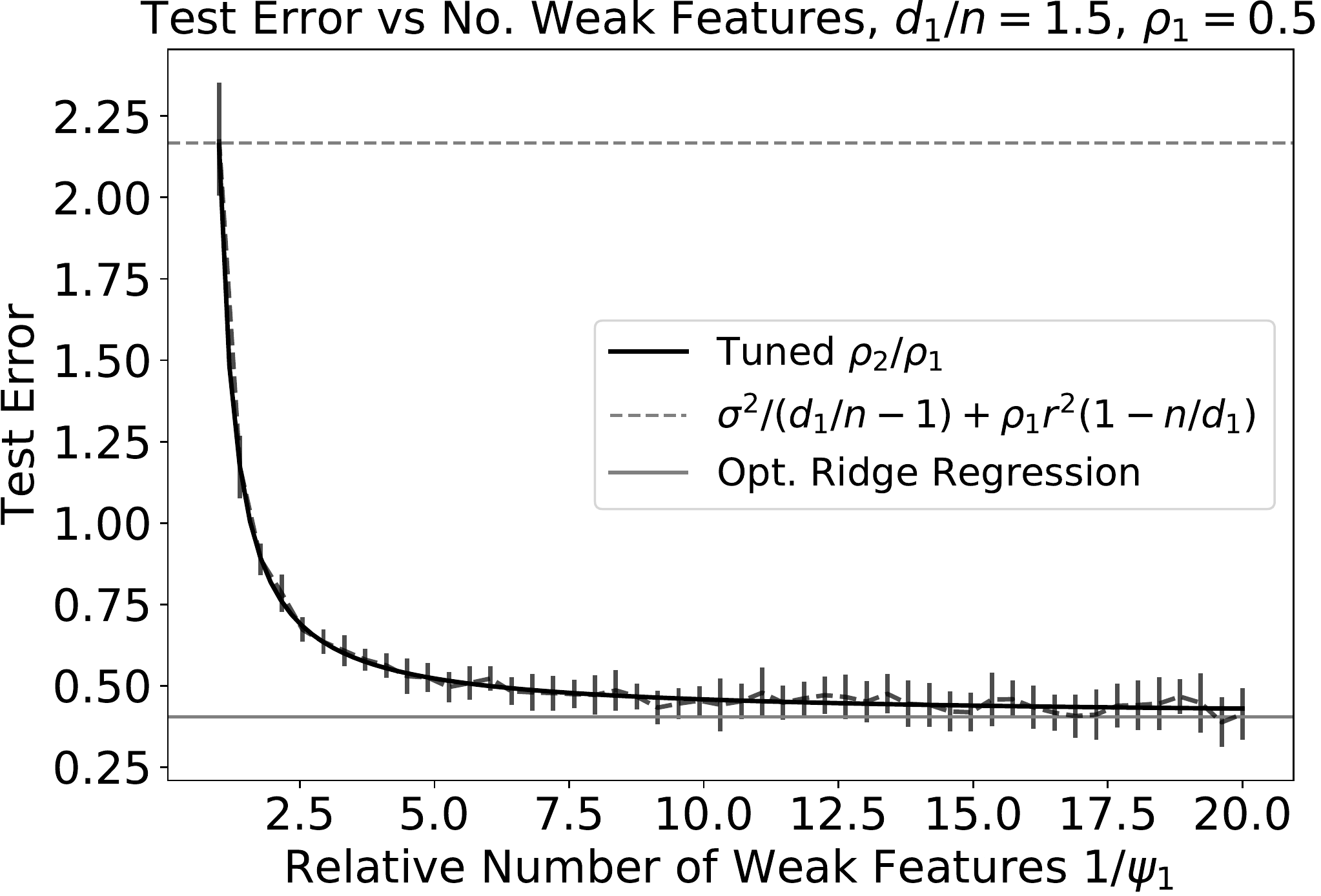}
    \end{subfigure}
    \caption{Ridgeless test error for strong and weak features model ($r^2 = \sigma^2 = 1$) against eigenvalue ratio $\rho_2/\rho_1$ (\textit{Left}) and size of noisy bulk $1/\psi_1 = d_1/d$  (\textit{Right}). Solid lines show theory computed using $v(0)$ with $\gamma\psi_1 = d_1/n = 1.5$ and $\rho_1 = 0.5$. Dashed lines are simulations with $d= 2^{8}$ (\textit{Left}) and $2^{10}$ (\textit{Right}) and $20$ replications. \textit{Solid Grey Horizontal Line}: Performance of optimally tuned ridge regression with strong features only. }
\label{fig:TestError:OverparamReg}
\end{figure}

\subsection{Ridgeless Bias and Variance}
\label{sec:TwoBulks:Ridgeless:Variance}

In this section we investigate how the ridgeless bias and variance depend on the ratio of dimension to sample size $\gamma$. 
Looking to Figure \ref{fig:TestError:Variance} the ridgeless bias and variance is plotted against the ratio of dimension to sample size in the overparameterised regime $\gamma \geq 1$ . 

Note an additional peak in the ridgeless bias and variance is observed beyond the interpolation threshold. This has only recently been empirically observed for the test error \cite{nakkiran2020optimal}, as such, these plots now theoretically verify this phenomenon. The location of the peaks naturally depends on the number of strong and weak features as well as the ambient dimension, as denoted by the vertical lines. Specifically, the peak occurs in the ridgeless bias for the ``hard'' setting when the number of samples and number of strong features are equal $n=d_1$. Meanwhile, a peak occurs in the ridgeless variance when the number of samples and strong features equal $n=d_1$, and the eigenvalue ratio is large $\rho_1 > \rho_2$. 
This demonstrates that learning curves beyond the interpolation threshold can have different characteristics due to the interplay between the covariate structure and underlying data. We conjecture this arises due to instabilities of the design matrix Moore-Penrose Pseudo-inverse, akin to the isotropic setting \cite{belkin2019two}. Since variance matches prior work \cite{dobriban2018high}, the additional peak could be previously derived. Meanwhile the peak in the bias here uses of the source condition, and thus, as far as we aware is not encompassed in prior work. 

\begin{figure}[!h]
    \centering
    \begin{subfigure}[b]{0.45\textwidth}
        \centering
        \includegraphics[width=\textwidth]{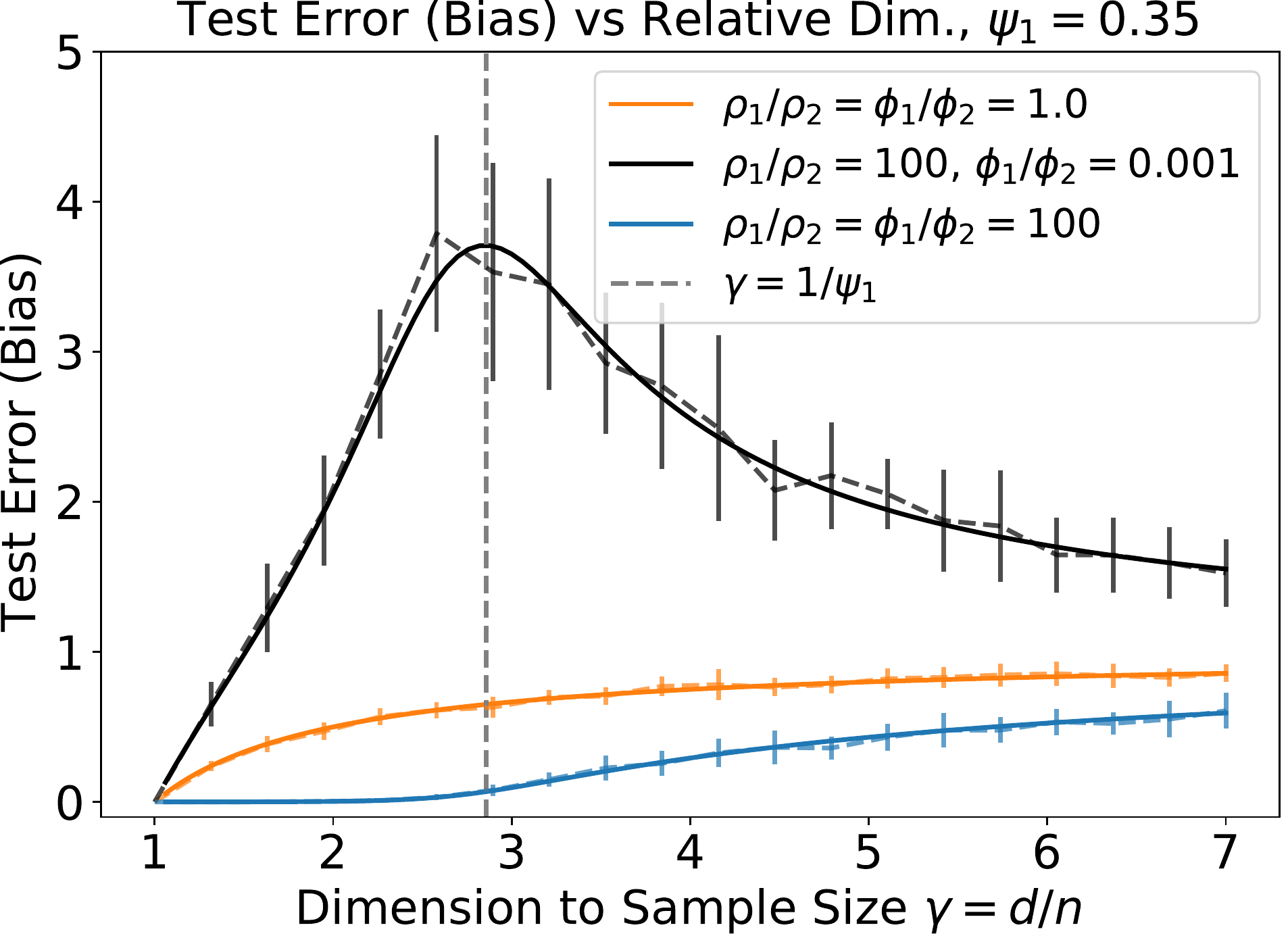}
      \end{subfigure}
    \begin{subfigure}[b]{0.45\textwidth}
        \centering
        \includegraphics[width=\textwidth]{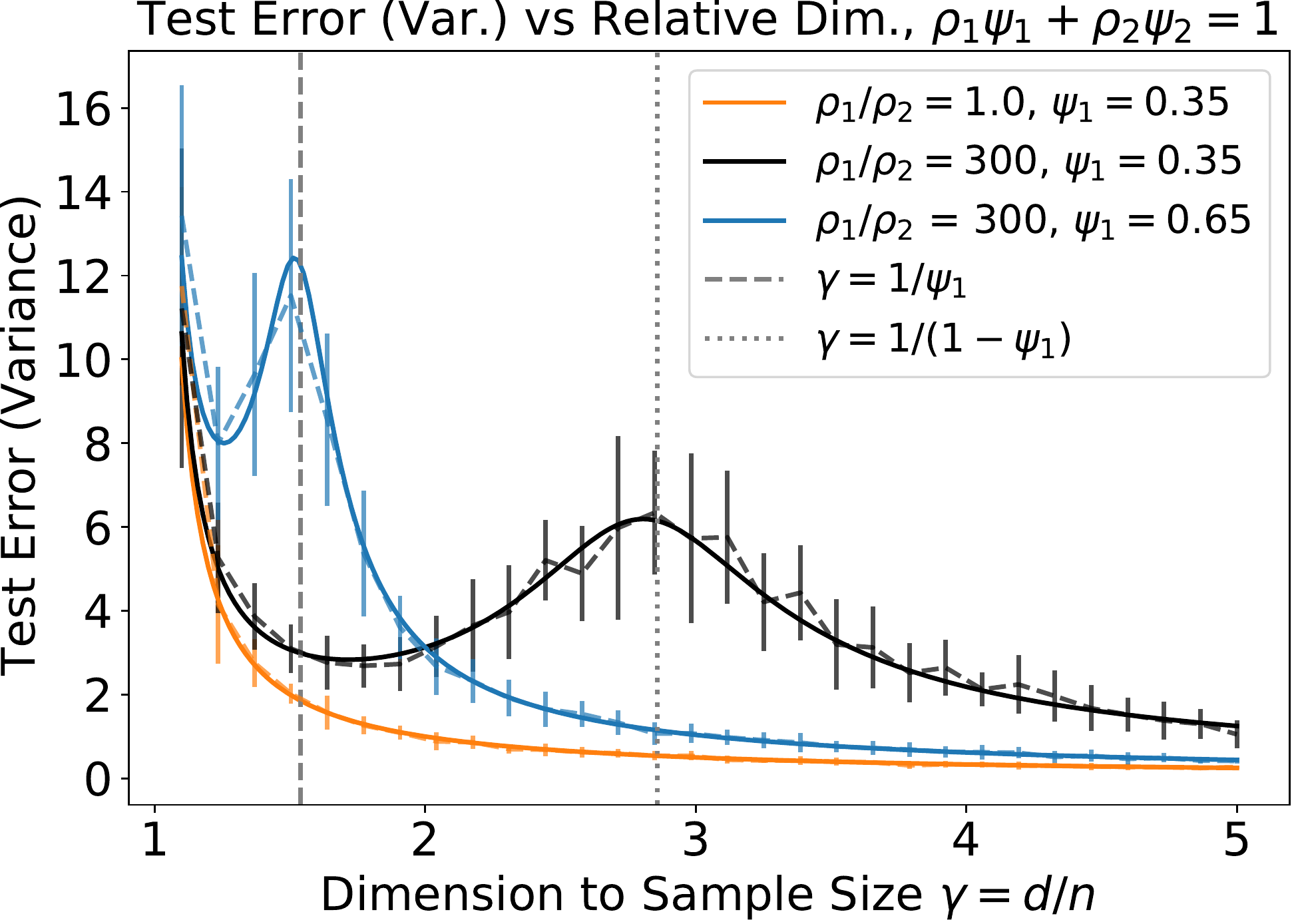}
      \end{subfigure}
    \caption{Ridgeless bias and variance for strong and weak feature model plotted against relative dimension $\gamma  =  d/n$ with various eigenvalue ratios $\rho_1/\rho_2$ and coefficients $\phi_1/\phi_2$. Solid lines are theory computed using $v(0)$ with $\psi_1  =  0.35$, $\E[\langle x,\beta^{\star}\rangle^2]  =  \rho_1 \phi_1\psi_1  +  \rho_2 \phi_2 \psi_2  =  1 $ and $\E[\|\beta^{\star}\|_2^2]  =  r^2(\phi_1 \psi_1  +  \phi_2 \psi_2)  =  1$. Dashed lines are simulations with $d  =   2^{8}$ and 20 replications. 
    }
\label{fig:TestError:Variance}
\end{figure}

\section{Conclusion}
\label{sec:conc}

In this work, we introduced a framework for studying ridge regression in a high-dimensional regime.
We characterised the limiting risk of ridge regression in terms of the dimension to sample size ratio, the spectrum of the population covariance and the coefficients of the true regression parameter along the covariance basis. This extends prior work~\cite{dicker2016ridge,dobriban2018high}, that considered an isotropic ground truth parameter. Our extension enables the  study of ``prior misspecification'', where signal strength may decrease faster or slower than postulated by the ridge estimator, and its effect on ideal regularisation.

We instantiated this general framework to a simple structure, with strong and weak features.
In this case, we show that ``ridgeless'' regression with zero regularisation can be optimal among all ridge regression estimators.
This occurs when the signal-to-noise ratio is large and when strong features (with large eigenvalue of the covariance matrix) have sufficiently more signal than weak ones. The latter condition corresponds to an ``easy'' or ``lower-dimensional'' problem, where ridge tends to over-penalise along strong features.
This phenomenon does not occur for isotropic priors, where optimal regularisation is always strictly positive in the presence of noise.
Finally, we discussed noisy weak features, which act as a form of regularisation, and concluded by showing additional peaks in ridgeless bias and variance can occur for our model.

Moving forward, it would be natural to consider non-Gaussian covariates. Given universality results in Random Matrix Theory 
we expect that the results provided here extend to the case of random vectors with independent coordinates (and linear transformations thereof).
Other structures for the ground truth and data generating process can be investigated through Theorem \ref{main:thm} by consider different functions $\Phi$ and the population eigenvalue distributions. The tradeoff between prediction and estimation error exhibited by \cite{dobriban2018high} in the isotropic case can be explored with a general source $\Phi$.

\section{Acknowledgments}
D.R. is supported by the EPSRC and MRC through the
OxWaSP CDT programme (EP/L016710/1). Part of this
work has been carried out at the Machine Learning Genoa
(MaLGa) center, Universita di Genova (IT). 
L.R. acknowledges the financial support of the European Research Council (grant SLING 819789), the AFOSR projects FA9550-17-1-0390 and BAA-AFRL-AFOSR-2016-0007 (European
Office of Aerospace Research and Development), and the
EU H2020-MSCA-RISE project NoMADS - DLV-777826. We would also like to thank the anonymous reviewers for their feedback and suggestions.

\bibliographystyle{plain}
\bibliography{References}

\newpage
\onecolumn

\appendix

\setcounter{tocdepth}{2}
\tableofcontents

\section{Proofs for Ridge Regression}
In this section we provide the calculations associated to ridge regression. 
Section \ref{sec:ProofProp} provides the proof of Proposition \ref{prop:reduction-source-condition}.
Section \ref{sec:ProofOracle} provides the calculation for the oracle estimator presented in remark \ref{rem:oracle-estimator}. 
Section~\ref{sec:ProofRR:Prelims} provides some preliminary calculations related to random matrix theory.
Section \ref{sec:ProofRR:Thm} gives the proof of Theorem \ref{main:thm}.
Section \ref{sec:proofRR:Corr} provides the proof of Corollary \ref{Cor:SpikedCovar}. 
Section \ref{sec:ProofRR:TwoBulks} provides the calculations associated to the strong and weak features model. 

\subsection{Proof of Proposition \ref{prop:reduction-source-condition}}
\label{sec:ProofProp}
In the proof of this result, it is useful to indicate dependence on the true parameter $\beta^\star$ by denoting the risk $R_{\beta^\star}(\cdot) = \E_{\epsilon}[\|\Sigma^{1/2}(\cdot - \beta^{\star})\|_2^2]  + \sigma^2$.
We also denote by $\excessrisk_{\beta^\star} (\beta) = R_{\beta^\star} (\beta) - R_{\beta^\star} (\beta^\star) = \| \Sigma^{1/2} (\beta - \beta^\star) \|^2_2$ the excess risk of $\beta \in \R^d$ when the true parameter is $\beta^\star \in \R^d$.

\begin{lemma}
  \label{lem:rotation-invariance}
  Let $V_1, \dots, V_k \subset \R^d$ (with $k \leq d$) denote the eigenspaces of $\Sigma$.
  For $j = 1, \dots, k$, let $U_j \in O (V_j)$ be a linear isometry of $V_j$, and let $U \in \R^{d \times d}$ be the linear isometry defined by $U v = U_j v$ for $v \in V_j$, $j= 1, \dots, k$.
  Then,
  \begin{equation*}
    \E_{X,\eps} [ R_{\beta} (\wh \beta_\lambda) - R_{\beta} (\beta) ]
    = \E_{X,\eps} [ R_{U \beta} (\wh \beta_\lambda) - R_{U \beta} (U \beta) ]
    \, .
  \end{equation*}
\end{lemma}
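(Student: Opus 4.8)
The plan is to use the \emph{equivariance} of the ridge estimator under orthogonal changes of coordinates in feature space, together with the fact that the Gaussian design is preserved in law by the specific map $U$, since $U$ commutes with $\Sigma$.

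First I would record the geometry of $U$: because it restricts to a linear isometry of each eigenspace $V_j$ of $\Sigma$, it preserves $V_j$ and $V_j^{\perp}$, hence commutes with the orthogonal projector onto $V_j$ and therefore with every spectral function of $\Sigma$; in particular $U^{\top}\Sigma U = \Sigma$, $U\Sigma^{1/2} = \Sigma^{1/2}U$, and of course $U^{\top}U = I$. Next I would note the equivariance of ridge regression: writing $\wh\beta_{\lambda}(A,W) := (A^{\top}A/n + \lambda I)^{-1}A^{\top}W/n$ for the estimator with design $A$ and responses $W$, one has $\wh\beta_{\lambda}(AV,W) = V^{\top}\wh\beta_{\lambda}(A,W)$ for every orthogonal $V$ (insert $\lambda I = \lambda V^{\top}V$ and factor $V^{\top}(\cdot)V$ out of the inverse).

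The heart of the argument is a change of variables. In the instance with true parameter $U\beta$ the responses are $X(U\beta) + \sigma\eps$; introducing $Z := XU$ (so $X = ZU^{\top}$ and $X(U\beta) = Z\beta$), the ridge estimator becomes $\wh\beta_{\lambda}(ZU^{\top}, Z\beta + \sigma\eps) = U\,\wh\beta_{\lambda}(Z, Z\beta + \sigma\eps)$ by the equivariance above, and its excess risk is
\[
  \excessrisk_{U\beta}(\wh\beta_{\lambda})
  = \big\| \Sigma^{1/2}U\big( \wh\beta_{\lambda}(Z, Z\beta + \sigma\eps) - \beta \big) \big\|_2^2
  = \big\| \Sigma^{1/2}\big( \wh\beta_{\lambda}(Z, Z\beta + \sigma\eps) - \beta \big) \big\|_2^2 ,
\]
using $U\Sigma^{1/2} = \Sigma^{1/2}U$ and that $U$ is an isometry. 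The right-hand side is exactly $\excessrisk_{\beta}$ evaluated at the ridge estimator built from design $Z$ with responses $Z\beta + \sigma\eps$. Taking $\E_{X,\eps}$, it then remains to observe that replacing $X$ by $Z = XU$ does not change the average: $Z$ has the same law as $X$ (its rows are \iid $\gaussdist(0, U^{\top}\Sigma U) = \gaussdist(0,\Sigma)$), and the quantity being averaged is a quadratic form in $\eps$ given the design, so its expectation over $\eps$ depends only on the design and on the conditional moments of $\eps$ (namely $0$ and $I$), which are unaffected by $X \mapsto XU$. Hence $\E_{X,\eps}[\excessrisk_{U\beta}(\wh\beta_{\lambda})] = \E_{X,\eps}[\excessrisk_{\beta}(\wh\beta_{\lambda})]$, which is the claim since $R_{\beta'}(\wh\beta_{\lambda}) - R_{\beta'}(\beta') = \excessrisk_{\beta'}(\wh\beta_{\lambda})$.

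I do not anticipate a serious obstacle; the argument is essentially a symmetrization. The points needing care are verifying in the first step that the block-wise isometry $U$ genuinely commutes with $\Sigma^{1/2}$ (and not merely with $\Sigma$), and, in the third step, applying the substitution $Z = XU$ consistently to both the design matrix and the response vector.
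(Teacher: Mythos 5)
Your proof is correct and takes essentially the same route as the paper's: both rely on the equivariance of the ridge map under right-multiplication of the design by an orthogonal matrix, the commutativity of $U$ with $\Sigma^{1/2}$ (since $U$ is a block isometry on the eigenspaces of $\Sigma$), and the invariance in law of the Gaussian design under $X \mapsto XU$. Your handling of the noise is in fact slightly more careful than the paper's, which asserts that $(X',\eps)$ has the same joint distribution as $(X,\eps)$; since the model only fixes the first two conditional moments of $\eps$ given $X$, your observation that the $\eps$-average is a quadratic form in $\eps$ given the design, hence determined by those moments, is the cleaner justification.
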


\begin{proof}[Proof of Lemma~\ref{lem:rotation-invariance}]
  Denote $\beta' = U \beta$, as well as
  $X' = X U^{-1}$ and $x' = U^{-1} x$.
  Let $\wh \beta_\lambda'$ the Ridge estimator computed on data $(X',Y)$, namely
  \begin{equation*}
    \wh \beta_\lambda'
    = (X'^{\top} X' + \lambda n I)^{-1} X'^\top Y
    = ( U X^\top X U^{-1} + \lambda n I )^{-1} U X^\top Y
    = U \wh \beta_\lambda
    \, .
  \end{equation*}
  Then, $y = \langle \beta, x\rangle + \sigma \eps = \langle \beta', x'\rangle + \sigma \eps$, hence the best linear predictor of $y$ based on $x'$ is $\beta'$.
  In addition, $x'$ has distribution $\gaussdist (0, U^{-1} \Sigma U) = \gaussdist (0, \Sigma)$, where $U^{-1} \Sigma U = \Sigma$ comes from the fact that $U$ is an isometry on the eigenspaces $V_j$ of $\Sigma$.
  This implies that
  $(X',\eps)$ has the same distribution as $(X,\eps)$, and thus
  $\E_{\eps, X} [ \excessrisk_{\beta'} (\wh \beta'_\lambda) ] = \E_{\eps, X} [ \excessrisk_{\beta'} (\wh \beta_\lambda) ]$.
  On the other hand,
  \begin{equation*}
    \excessrisk_{\beta'} (\wh \beta_\lambda') = 
  \| \Sigma^{1/2} (\wh \beta_\lambda' - \beta') \|^2_2 = \| \Sigma^{1/2} U (\wh \beta_\lambda - \beta) \|^2_2 = \| \Sigma^{1/2} (\wh \beta_\lambda - \beta) \|^2_2 = \excessrisk_{\beta} (\wh \beta_\lambda)
  \end{equation*}
  (note that $\| \Sigma^{1/2} U \cdot \|^2_2 = \| U \Sigma^{1/2} \cdot \|^2_2 = \| \Sigma^{1/2} \cdot \|^2_2$ as $U$ commutes with $\Sigma^{1/2}$ and is an isometry),
  so that $\E_{\eps, X} [ \excessrisk_{\beta'} (\wh \beta'_\lambda) ] = \E_{\eps, X} [ \excessrisk_{\beta} (\wh \beta_\lambda) ]$.
  This proves that
  $\E_{\eps, X} [ \excessrisk_{\beta'} (\wh \beta_\lambda) ] = \E_{\eps, X} [ \excessrisk_{\beta} (\wh \beta_\lambda) ]$.
\end{proof}

We now turn to the proof of Proposition~\ref{prop:reduction-source-condition}:

\begin{proof}[Proof of Proposition~\ref{prop:reduction-source-condition}]
    Let $V_1, \dots, V_k$ denote the eigenspaces of $\Sigma$, with distinct eigenvalues $\tau_1' > \dots > \tau_k'$.
  Let $U_1, \dots, U_k$ be independent random isometries, where $U_j$ is distributed according to the uniform (Haar) measure on the orthogonal group of $V_j$.
  Define $U$ to be the random isometry acting as $U_j$ on $V_j$, and let $\beta = U \beta^\star$ and $\Pi$ its distribution.

  Note that $U$ is of the form of Lemma~\ref{lem:rotation-invariance}, hence $\E_{X,\eps} [\excessrisk_{U \beta^\star} (\wh \beta_\lambda)] = \E_{X,\eps} [\excessrisk_{\beta^\star} (\wh \beta_\lambda)]$ and thus
  \begin{equation}
    \label{eq:proof-reduction-source-inv}
    \E_{\beta \sim \Pi} \E_{X,\eps} [\excessrisk_{\beta} (\wh \beta_\lambda)]
    = \E_U \E_{X,\eps} [\excessrisk_{U \beta^\star} (\wh \beta_\lambda)]
    = \E_{X,\eps} [\excessrisk_{\beta^\star} (\wh \beta_\lambda)]
    \, .
  \end{equation}

  Now, let $\beta_j' \in V_j$ be the orthogonal projection of $\beta^\star$ on $V_j$, so that $U \beta^\star = \sum_{j=1}^k U_j \beta_j'$.  
  We have $\E [ U \beta^\star ] = 0$ since $\E [U_j] = 0$ for all $j$.
  In addition, the distribution of $U_j \beta^\star$ is invariant by rotation (since $R_j U_j$ has the same distribution as $U_j$ for any fixed rotation $R_j$), hence $\E [ (U_j \beta_j') (U_j \beta_j')^\top ] = t_j I_{V_j}$ (with $I_{V_j}$ the identity on $V_j$), where letting $d_j = \dim (V_j)$,
  \begin{equation}
    \label{eq:proof-red-var}
    d_j \cdot t_j = \tr \E [ (U_j \beta_j') (U_j \beta_j')^\top ]
    = \E [ \| U_j \beta_j' \|^2_2  ]
    = \| \beta'_j \|^2_2 \, ,
  \end{equation}
  hence $t_j = \| \beta'_j \|^2_2 / d_j$.
  In addition, if $j \neq l$, by independence of $U_j,U_l$,
  \begin{equation}
    \label{eq:proof-red-cov}
    \E [ (U_j \beta_j') (U_l \beta_l')^\top ]
    = \E [ U_j \beta_j' \beta_l'^\top U_l^\top ]
    = \E [ U_j ] \beta_j' \beta_l'^\top \E [ U_l ]^\top
    = 0
    \, .
  \end{equation}
  Hence, $\Pi$ has covariance $\sum_{j=1}^k (\| \beta_j' \|^2 / d_j) I_{V_j}$, which is precisely $\Phi (\Sigma) / d$ where $\Phi$ is defined as in~\eqref{eq:equivalent-source}.
  The proof is concluded by noting that the quantity
  $\E_{\eps,X}\excessrisk_{\beta} (\wh \beta_\lambda)$
  is quadratic in $\beta$, hence if $\Pi'$ is another distribution on $\R^d$ with mean $0$ and covariance $\Phi (\Sigma) / d$, then
  $\E_{\beta \sim \Pi'} \E_{\eps,X}\excessrisk_{\beta} (\wh \beta_\lambda) = \E_{\beta \sim \Pi} \E_{\eps,X}\excessrisk_{\beta} (\wh \beta_\lambda) = \E_{\eps,X}\excessrisk_{\beta^\star} (\wh \beta_\lambda)$.
\end{proof}

\subsection{Proof of Oracle Estimator (Remark \ref{rem:oracle-estimator})}
\label{sec:ProofOracle}

  Since the risk is quadratic, the average risk (integrated over the prior) of any estimator linear in $Y$ only depends on the first two moments of the prior, hence one can assume that the prior is Gaussian (namely, $\gaussdist (0, r^2 \Phi (\Sigma) / d)$) without loss of generality.
  In this case, a standard computation shows that the posterior is $\gaussdist (\wt \beta, [ {X^\top X} + (\sigma^2d/r^2) \Phi(\Sigma)^{-1} ]^{-1})$, where $\wt \beta$ is the estimator defined in~\eqref{eq:oracle-estimator}.
  Finally, since the risk is quadratic, the Bayes-optimal estimator is the posterior mean, which corresponds to $\wt \beta$.

\subsection{Random Matrix Theory Preliminaries}
\label{sec:ProofRR:Prelims}
We now introduce some useful properties of the Stieltjes transform as well as its companion transform. Firstly, we know the companion transform satisfies the Silverstein equation \cite{silverstein1992signal,silverstein1995analysis} 
\begin{align}
\label{equ:CompanionTransform:StationaryPoint}
    - \frac{1}{v(z)}
    = 
    z - \gamma \int \frac{\tau}{1 + \tau v(z)} dH(\tau).
\end{align}
We then have for $z \in \mathcal{S} := \{u  + i v : v \not= 0, \text{ or } v = 0, u > 0\}$, the companion transform $v(z)$ is the unique solution to the Silverstein equation with $v(z) \in \mathcal{S}$ such that the sign of the imaginary part is preserved $\mathrm{sign}(\mathrm{Im}(v(z))) - \mathrm{sign}(\mathrm{Im}(z))$. The above can then be differentiated with respect to $z$ to obtain a formula for $v^{\prime}(z)$ in terms of $v(z)$:
\begin{align*}
    \frac{\partial v(z)}{\partial z}
    = 
    \Big( \frac{1}{v(z)^2} - \gamma \int \frac{\tau^2}{(1+\tau v(z))^2} dH(\tau) \Big)^{-1}
\end{align*}
Meanwhile from from the equality $\gamma (m(z) + 1/z) = v(z) + 1/z$ we note that we have the following equalities 
\begin{align}
\label{equ:STransformToComp}
    1-\gamma(1-\lambda m(-\lambda)) & = \lambda v(-\lambda)\\
    1-\lambda m(-\lambda) & = \gamma^{-1}(1-\lambda v(-\lambda)) \nonumber \\
    m(-\lambda) - \lambda m^{\prime}(-\lambda) & = \gamma^{-1}(v(-\lambda) - \lambda v^{\prime}(-\lambda))
    \nonumber
\end{align}
which we will readily use to simplify/rewrite a number of the limiting functions.

\subsection{Proof of Theorem \ref{main:thm}}
\label{sec:ProofRR:Thm}
We begin with the decomposition into bias and variance terms following \cite{dobriban2018high}.
The difference for the ridge parameter can be denoted
\begin{align*}
    \wh \beta_{\lambda} 
    - \beta^{\star}
    =  - \lambda \big( \frac{X^{\top}X}{n} + \lambda I \big)^{-1} \beta^{\star} 
    + 
    \sigma \big( \frac{X^{\top} X}{n} + \lambda I \big)^{-1} \frac{X^{\top} \epsilon}{n}
\end{align*}
And thus taking expectation with respect to the noise in the observations $\epsilon$ 
\begin{align*}
     \E_{\epsilon}[ R(\wh \beta_{\lambda})]
    - 
    R(\beta^{\star}) 
     & = 
    \E_{\epsilon}[\| \Sigma^{1/2}(\wh \beta_{\lambda} - \beta^{\star}) \|_2^2] \\
    & = 
    \E_{\epsilon}[\| \Sigma^{1/2}(\wh \beta_{\lambda} - \E_{\epsilon}[\wh \beta_{\lambda}] ) \|_2^2]
    + 
    \|\Sigma^{1/2}(\E_{\epsilon}[\wh \beta_{\lambda}] - \beta^{\star})\|_2^2\\
    & =
    \sigma^2 
    \E_{\epsilon}\big[ 
    \big\| \Sigma^{1/2} \big( \frac{X^{\top} X}{n} + \lambda I \big)^{-1} \frac{X^{\top} \epsilon}{n}
    \big\|_2^2
    \big]
    +
    \lambda^2 \| \Sigma^{1/2} \big( \frac{X^{\top}X}{n} - \lambda I \big)^{-1} \beta^{\star}\|_2^2\\
    & = \frac{\sigma^2}{n}
    \trace\Big( \Big( \frac{X^{\top}X}{n} + \lambda I \Big)^{-1} 
    \Sigma \Big( \frac{X^{\top}X}{n} + \lambda I \Big)^{-1} 
    \frac{X^{\top} X}{n}
    \Big) \\
    & \quad\quad 
    \lambda^2 
    \trace \Big( 
    (\beta^{\star})^{\top} 
    \Big( \frac{X^{\top}X}{n} + \lambda I \Big)^{-1} \Sigma
    \Big( \frac{X^{\top}X}{n} + \lambda I \Big)^{-1}
    \beta^{\star}
    \Big)
\end{align*}
Taking expectation with respect to $\E_{\beta^{\star}}$ we arrive at 
\begin{align*}
    & \E_{\beta^{\star}}[
    \E_{\epsilon}[ R(\wh \beta_{\lambda})]
    - 
    R(\beta^{\star})
    ]
    = 
    \frac{\sigma^2}{n}
    \trace\Big( \Big( \frac{X^{\top}X}{n} + \lambda I \Big)^{-1} 
    \Sigma \Big( \frac{X^{\top}X}{n} + \lambda I \Big)^{-1} 
    \frac{X^{\top} X}{n}
    \Big) \\
    & \quad\quad 
    \frac{\lambda^2 r^2}{d}
    \trace \Big( 
    \Big( \frac{X^{\top}X}{n} + \lambda I \Big)^{-1} \Sigma
    \Big( \frac{X^{\top}X}{n} + \lambda I \Big)^{-1}
    \Phi(\Sigma)
    \Big) \\
    & = 
    \sigma^2 \gamma \frac{1}{d}
    \trace\Big( \Big( \frac{X^{\top}X}{n} + \lambda I \Big)^{-1} 
    \Sigma 
    \Big) 
    -\lambda 
    \sigma^2\gamma \frac{1 }{d}
    \trace\Big( \Big( \frac{X^{\top}X}{n} + \lambda I \Big)^{-2} 
    \Sigma
    \Big)\\
    & \quad\quad +
    \frac{\lambda^2 r^2}{d}
    \trace \Big( 
    \Big( \frac{X^{\top}X}{n} + \lambda I \Big)^{-1} \Sigma
    \Big( \frac{X^{\top}X}{n} + \lambda I \Big)^{-1}
    \Phi(\Sigma)
    \Big) \\
\end{align*}
It is now a matter of showing the asymptotic almost sure convergence of the following three functionals
\begin{align*}
    & \frac{1}{d}
    \trace\Big( \Big( \frac{X^{\top}X}{n} + \lambda I \Big)^{-1} 
    \Sigma 
    \Big), 
    \quad 
    \frac{1}{d}
    \trace\Big( \Big( \frac{X^{\top}X}{n} + \lambda I \Big)^{-2} 
    \Sigma
    \Big)\\
    & \quad 
    \text{ and } 
    \frac{1}{d} 
    \trace \Big( 
    \Big( \frac{X^{\top}X}{n} + \lambda I \Big)^{-1} \Sigma
    \Big( \frac{X^{\top}X}{n} + \lambda I \Big)^{-1}
    \Phi(\Sigma)
    \Big)
\end{align*}
The limit of the first trace quantity comes directly from \cite{ledoit2011eigenvectors} meanwhile the limit of the second trace quantity is proven in \cite{dobriban2018high}. The third trace quantity depends upon the source condition $\Phi$ and computing its limit is one of the main technical contributions of this work. The limits for these objects is summarised within the following Lemma, the proof of which provides the key steps for computing the limit involving the source function. 
\begin{lemma}
\label{lem:Limits}
Under the assumptions of Theorem \ref{main:thm} for any $\lambda > 0$ we have almost surely as $n,d \rightarrow \infty$ with $d/n = \gamma$ 
\begin{align}
\label{equ:lem:limits:1}
    & \frac{1}{d}
    \trace\Big( \Big( \frac{X^{\top}X}{n} + \lambda I \Big)^{-1} 
    \Sigma 
    \Big)
    \rightarrow 
    \frac{1 - \lambda m(-\lambda )}{1 - \gamma (1 -\lambda m(-\lambda)) } \\
\label{equ:lem:limits:2}
    & \frac{1}{d}
    \trace\Big( \Big( \frac{X^{\top}X}{n} + \lambda I \Big)^{-2} 
    \Sigma
    \Big)
    \rightarrow 
    \frac{ m(-\lambda) - \lambda m^{\prime}(-\lambda) }{\big( 1 - \gamma(1 - \lambda m(-\lambda))\big)^2 }\\
\label{equ:lem:limits:3}
    & \frac{1}{d} 
    \trace \Big( 
    \Big( \frac{X^{\top}X}{n} + \lambda I \Big)^{-1} \Sigma
    \Big( \frac{X^{\top}X}{n} + \lambda I \Big)^{-1}
    \Phi(\Sigma)
    \Big)
    \rightarrow 
    \frac{ \Theta^{\Phi}(-\lambda) + \lambda \frac{ \partial \Theta^{\Phi}(-\lambda)}{\partial \lambda } }{\big( 1 - \gamma ( 1 - \lambda m(-\lambda)) \big)^{2}}
\end{align}
\end{lemma}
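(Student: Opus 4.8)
Write $\wh\Sigma := X^\top X/n$, $G := (\wh\Sigma + \lambda I)^{-1}$, and work in the eigenbasis of $\Sigma$, so that $\Sigma$, $\Phi(\Sigma)$ and all spectral functions of $\Sigma$ (though not $\wh\Sigma$) commute. The three limits are handled separately, with the first two reduced to known results and the third being the main new contribution. For \eqref{equ:lem:limits:1}: by the generalized Marchenko--Pastur law of \cite{ledoit2011eigenvectors}, $d^{-1}\trace(\Phi(\Sigma)(\wh\Sigma - zI)^{-1}) \to \Theta^\Phi(z)$ on $\mathbb C\setminus\R_+$; with $\Phi(\tau) = \tau$ and $z = -\lambda$ this reads $d^{-1}\trace(\Sigma G) \to \Theta^{\mathrm{id}}(-\lambda)$, and writing $c := 1 - \gamma(1 - \lambda m(-\lambda))$ and inserting $m(-\lambda) = \int(\tau c + \lambda)^{-1}dH(\tau)$ (i.e.\ \eqref{equ:STransform} at $z=-\lambda$) gives $\Theta^{\mathrm{id}}(-\lambda) = c^{-1}(1-\lambda m(-\lambda))$, the claimed limit. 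For \eqref{equ:lem:limits:2}: $\partial_\lambda G = -G^2$, so the target is $-\partial_\lambda[d^{-1}\trace(\Sigma G)]$; the convergence above being locally uniform for holomorphic functions of $z$, differentiation is legitimate (Vitali/Montel), giving $-\partial_\lambda\Theta^{\mathrm{id}}(-\lambda)$, which after a short calculation (with $w(z):=1+zm(z)$, $\Theta^{\mathrm{id}} = w/(1-\gamma w)$, $\partial_z\Theta^{\mathrm{id}} = w'/(1-\gamma w)^2$) is the stated form; alternatively one cites \cite{dobriban2018high}.

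The core is \eqref{equ:lem:limits:3}. The plan is to realise the bilinear trace as a derivative of a single-resolvent trace: since $\partial_s(\wh\Sigma + \lambda I + s\Phi(\Sigma))^{-1}|_{s=0} = -G\Phi(\Sigma)G$, cyclicity gives
\[
  \frac1d\trace\big(G\Sigma G\Phi(\Sigma)\big) = -\frac{\partial}{\partial s}\bigg|_{s=0}\frac1d\trace\Big(\Sigma\big(\wh\Sigma + \lambda I + s\Phi(\Sigma)\big)^{-1}\Big).
\]
The inner trace is a generalized resolvent with deterministic, $\Sigma$-commuting shift $T_s := \lambda I + s\Phi(\Sigma)$, positive definite for $|s|$ small (here $h_1>0$ is used); writing $\wh\Sigma = \Sigma^{1/2}W\Sigma^{1/2}$ with $W := Z^\top Z/n$ and $X = Z\Sigma^{1/2}$, the identity $(\wh\Sigma + T_s)^{-1} = T_s^{-1/2}(\wh\Sigma'_s + I)^{-1}T_s^{-1/2}$ with $\wh\Sigma'_s := (\Sigma T_s^{-1})^{1/2}W(\Sigma T_s^{-1})^{1/2}$ turns it into $d^{-1}\trace(\Sigma'_s(\wh\Sigma'_s + I)^{-1})$, where $\Sigma'_s := \Sigma T_s^{-1}$: a standard single-resolvent trace at argument $-1$ for a sample covariance whose population covariance $\Sigma'_s$ has limiting spectrum $H'_s$, the pushforward of $H$ under $\tau\mapsto\tau/(\lambda+s\Phi(\tau))$. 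By \cite{ledoit2011eigenvectors} it converges to $\Theta^{\mathrm{id}}_{H'_s}(-1)$, an explicit function of $s$ (at $s=0$, $\wh\Sigma'_0 = \wh\Sigma/\lambda$, so this reduces to $\Theta^{\mathrm{id}}(-\lambda)$ and its Stieltjes transform to $\lambda m(-\lambda)$, hence $\lambda v(-\lambda)$ by \eqref{equ:STransformToComp}). Differentiating at $s=0$ requires implicitly differentiating the fixed-point equation for the Stieltjes transform of $H'_s$, and this is exactly where the factor $1/v'(-\lambda) = v(-\lambda)^{-2} - \gamma\int\tau^2(1+\tau v(-\lambda))^{-2}dH(\tau)$ (the $v'$ formula of Section~\ref{sec:ProofRR:Prelims}) enters; simplifying yields $v'(-\lambda)(\lambda v(-\lambda))^{-2}\int\tau\Phi(\tau)(\tau v(-\lambda)+1)^{-2}dH(\tau)$, which equals the stated right-hand side by the analytic identities $\Theta^\Phi(-\lambda) + \lambda\,\partial_\lambda\Theta^\Phi(-\lambda) = v'(-\lambda)\int\tau\Phi(\tau)(\tau v(-\lambda)+1)^{-2}dH(\tau)$ (immediate from $\Theta^\Phi(-\lambda) = \lambda^{-1}\int\Phi(\tau)(\tau v(-\lambda)+1)^{-1}dH(\tau)$) and $1-\gamma(1-\lambda m(-\lambda)) = \lambda v(-\lambda)$. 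A variant closer to \cite{chen2011regularized} instead invokes directly a deterministic-equivalent formula for $d^{-1}\trace(A\wh G B\wh G)$ with $A,B$ commuting with $\Sigma$, specialised to $A=\Sigma$, $B=\Phi(\Sigma)$; the same $v'$-identity converts its correction term into the claimed form.

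The main obstacle is \eqref{equ:lem:limits:3}: unlike the first two limits, simply substituting the deterministic equivalent $\bar G = \lambda^{-1}(v(-\lambda)\Sigma + I)^{-1}$ of $G$ into $d^{-1}\trace(G\Sigma G\Phi(\Sigma))$ is \emph{incorrect}, and controlling the resulting correction --- via the concentration/deterministic-equivalent machinery of \cite{ledoit2011eigenvectors,chen2011regularized} --- is where the genuine random-matrix work lies; the rest is bookkeeping with the Silverstein equation~\eqref{equ:CompanionTransform:StationaryPoint}. Secondary technicalities: the generalized Marchenko--Pastur convergence is almost sure and locally uniform, legitimising the differentiations in $\lambda$ and in $s$ (the latter on a neighbourhood of $s=0$, where the resolvent is analytic in $s$); and $\Phi$ having finitely many discontinuities is harmless --- decompose $\Phi(\Sigma)$ along the eigenspaces of $\Sigma$, or sandwich $\Phi$ between continuous functions, since only the values of $\Phi$ on $\mathrm{supp}(H)$ enter.
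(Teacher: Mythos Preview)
Your handling of \eqref{equ:lem:limits:1} and \eqref{equ:lem:limits:2} matches the paper's: both cite \cite{ledoit2011eigenvectors} and then differentiate in $\lambda$ via Vitali. For \eqref{equ:lem:limits:3}, however, you take a genuinely different route. The paper proceeds by a direct leave-one-out argument: writing $R(z)=(\wh\Sigma-zI)^{-1}$ and $R_i(z)$ for the resolvent with the $i$-th sample removed, two applications of Sherman--Morrison to $\frac{1}{n}\sum_i Z_i\Sigma^{1/2}R(z)\Phi(\Sigma)R(z)\Sigma^{1/2}Z_i^\top$ yield a finite-sample identity
\[
\frac{1}{d}\trace\big(\Phi(\Sigma)R(-\lambda)\big) - \frac{\lambda}{d}\trace\big(\Phi(\Sigma)R(-\lambda)^2\big)
= \frac{n}{d}\cdot\frac{\frac{1}{n}\trace\big(\Sigma R(-\lambda)\Phi(\Sigma)R(-\lambda)\big)}{\big(1+\frac{1}{n}\trace(\Sigma R(-\lambda))\big)^2} + \delta,
\]
with four explicit error terms $\delta_1,\dots,\delta_4$ driven to zero by rank-one perturbation bounds and quadratic-form concentration (Lemma~\ref{lem:Conc}). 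Since the left-hand side and the denominator have known limits (\cite{ledoit2011eigenvectors} plus Vitali in $\lambda$), the target is read off algebraically. Your perturbation trick $\partial_s(\wh\Sigma+\lambda I+s\Phi(\Sigma))^{-1}|_{s=0}=-G\Phi(\Sigma)G$, followed by the rescaling to a standard sample covariance with population $\Sigma T_s^{-1}$ and implicit differentiation of the Silverstein equation in $s$, is correct and lands on the same formula (your identity $\Theta^\Phi(-\lambda)+\lambda\,\partial_\lambda\Theta^\Phi(-\lambda)=v'(-\lambda)\int\tau\Phi(\tau)(\tau v(-\lambda)+1)^{-2}dH(\tau)$ is exactly what the paper computes in the strong-and-weak-features calculation). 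The trade-off: your route is more modular, outsourcing all the concentration work to the cited Ledoit--P\'ech\'e result and leaving only analytic bookkeeping; the paper's route is self-contained and makes the random-matrix work explicit. The one step you label a ``secondary technicality'' --- interchanging $\partial_s$ with the $n,d\to\infty$ limit --- does need a Vitali/Montel argument on a complex neighbourhood of $s=0$ (the resolvent is analytic and uniformly bounded in $s$ there, and Ledoit--P\'ech\'e gives a.s.\ pointwise convergence on a countable dense set of real $s$, which suffices for normal families); this is not hard but is comparable in effort to the paper's $\delta$-term control.
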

The result is arrived at by plugging in the above limits and noting from the definition of the Companion Transform $v$ that $1-\gamma(1-\lambda m(-\lambda)) = \lambda v(-\lambda)$, $1-\lambda m(-\lambda) = \gamma^{-1}(1-\lambda v(-\lambda))$ and, taking derivatives, $m(-\lambda) - \lambda m^{\prime}(-\lambda) = \gamma^{-1}(v(-\lambda) - \lambda v^{\prime}(-\lambda))$.
The proof of Lemma~\ref{lem:Limits}, which is the key technical step in the proof of Theorem~\ref{main:thm}, is provided in Appendix~\ref{app:proof-lemma-rmt}.

\subsection{Proof of Corollary \ref{Cor:SpikedCovar}}
\label{sec:proofRR:Corr}

In this section we provide the proof of Corollary \ref{Cor:SpikedCovar}. It will be broken into three parts associated to the three cases $\Phi(x) =x$, $\Phi(x) = 1$ and $\Phi(x) = 1/x$. 

\subsubsection{Case: $\Phi(x) = x$}
The purpose of this section is to demonstrate, in the case $\Phi(x) = x$, how the functional 
$\Theta^{\Phi}(-\lambda) + \lambda \frac{ \partial \Theta^{\Phi}(-\lambda)}{\partial \lambda } $ can be written in terms of the Stieltjes Transform $m(z)$. For this particular choice of $\Phi$ the asymptotics were calculated in \cite{chen2011regularized}, see also Lemma 7.9 in \cite{dobriban2018high}. 
We therefore repeat this calculation for completeness.
Now, in this case we have 
\begin{align*}
    \Theta^{\Phi}(z) = \int \frac{\tau}{\tau(1 - \gamma(1+ z m(-\lambda)))  - z} dH(\tau)
\end{align*}
Following the steps are the start of the proof for Lemma 2.2 in \cite{ledoit2011eigenvectors}, consider $1+ z m(z)$ 
\begin{align*}
    1 + z m(z) &= \int 1 + \frac{z}{\tau(1-\gamma(1+z m(z))) - z} d H(\tau) \\
	& = \int \frac{ \tau(1-\gamma(1+z m(z)))}{ \tau(1-\gamma(1+z m(z))) - z}d H(\tau) \\
	& = (1-\gamma(1+z m(z))) \Theta^{\Phi}(z)
\end{align*}
Solving for $\Theta^{\Phi}(z)$ we have
\begin{align*}
    \Theta^{\Phi}(z)  = \frac{1 + z m(z)}{1-\gamma(1+z m(z))} = \frac{1}{\gamma}\big( \frac{1}{ 1-\gamma(1+z m(z)) } - 1 \big)
\end{align*}
Picking $z = - \lambda$ and differentiating with respect to $\lambda$ we get 
\begin{align*}
    \frac{\partial \Theta^{\Phi}(-\lambda)}{\partial \lambda} 
	= 
	- \frac{m(-\lambda) - \lambda m^{\prime}(-\lambda)}{(1- \gamma(1-\lambda m(-\lambda)))^2}
\end{align*}
This leads to the final form
\begin{align*}
    \frac{ \Theta^{\Phi}(-\lambda) + \lambda \frac{ \partial \Theta^{\Phi}(-\lambda)}{\partial \lambda } }{\big( 1 - \gamma ( 1 - \lambda m(-\lambda)) \big)^{2}}
    & = 
    \frac{ 1 - \lambda m(-\lambda)}{(1- \gamma(1-\lambda m(-\lambda)))^3} 
    - 
    \lambda \frac{m(-\lambda) - \lambda m^{\prime}(-\lambda)}{(1- \gamma(1-\lambda m(-\lambda)))^4} \\
    & = \frac{\gamma^{-1} (1-\lambda v(-\lambda))}{(\lambda v(-\lambda))^3} 
	- \lambda \frac{ \gamma^{-1}(v(-\lambda) - \lambda v^{\prime}(-\lambda))}{(\lambda v(-\lambda))^4}\\
    & = \frac{v^{\prime}(-\lambda)}{\gamma \lambda^2 v(-\lambda)^4} - \frac{1}{\gamma (\lambda v(-\lambda))^2}  
\end{align*}
where on the second equality we used \eqref{equ:STransformToComp}. Multiplying through by $\lambda^2$ then yields the quantity presented.

\subsubsection{Case: $\Phi(x) = 1$} 
The functional of interest in this case aligns with that calculated within \cite{dobriban2018high}, which we include below for completeness. In particular we have 
$\Theta^{\Phi}(-\lambda) = m(-\lambda)$ and as such we get 
\begin{align*}
\Theta^{\Phi}(-\lambda) + \lambda \frac{ \partial \Theta^{\Phi}(-\lambda)}{\partial \lambda } 
& = m (-\lambda) - \lambda m^{\prime}(-\lambda)  = \gamma^{-1}(v(-\lambda) - \lambda v^{\prime}(-\lambda))
\end{align*}
where on the second equality we used \eqref{equ:STransformToComp}. Dividing by $v(-\lambda)^2$ as well as adding the asymptotic variance we get, from Theorem \ref{main:thm}, the limit as $n,d \rightarrow \infty$ 
\begin{align*}
 \E_{\beta^{\star}}[
    \E_{\epsilon}[ R(\wh \beta_{\lambda})]
    - 
    R(\beta^{\star})
    ]
     & \rightarrow   
\sigma^2 \frac{1 - \lambda v(-\lambda)}{\lambda v(-\lambda)}
     - 
     \lambda \sigma^2 \frac{ v(-\lambda) - \lambda v^{\prime}(-\lambda)}{ (\lambda v(-\lambda))^2}
+ \frac{r^2}{\gamma} \frac{v(-\lambda) - \lambda v^{\prime}(-\lambda)}{v(-\lambda)^2}\\
& = \sigma^2 \Big( \frac{v^{\prime}(-\lambda)}{(v(-\lambda))^2} - 1\Big)
+ 
\frac{r^2}{\gamma v(-\lambda)} 
- 
\frac{r^2\lambda}{\gamma} \frac{v^{\prime}(-\lambda)}{v(-\lambda)^2} 
\end{align*}

\subsubsection{Case: $\Phi(x) = 1/x$}
The functional in the case $\Phi(x) = 1/x$ takes the form 
\begin{align*}
    \Theta^{\Phi}(z) 
    & = 
    \int \frac{1}{\tau} 
    \frac{1}{ \tau (1 - \gamma( 1 + z m(z))) - z }
    dH(\tau).
\end{align*}
Observe that we have 
\begin{align*}
    \int \frac{1}{\tau} dH(\tau) +z \Theta^{\Phi}(z) 
    & = 
    \int \frac{1}{\tau} \Big( 1 + \frac{z}{\tau(1-\gamma(1+z m(z))) - z} \Big) dH(\tau)\\
     & = 
    \int \frac{1}{\tau} 
    \frac{\tau (1 - \gamma( 1 + z m(z))) }{ \tau (1 - \gamma( 1 + z m(z))) - z }
    dH(\tau) \\
    & = 
    (1 - \gamma( 1 + z m(z))) 
    \int 
    \frac{1}{ \tau (1 - \gamma( 1 + z m(z))) - z }
    dH(\tau) \\
    & = 
    (1 - \gamma( 1 + z m(z)))  m(z).
\end{align*}
Solving for  $\Theta^{\Phi}(z) $ and plugging in the definition of the companion transform $v(z)$ we arrive at 
\begin{align*}
    \Theta^{\Phi}(z)  
    & = 
    \frac{1}{z} \big( 
    (1 - \gamma( 1 + z m(z)))  m(z) - \frac{1}{z} \int \frac{1}{\tau} dH(\tau)
    \big) \\
    & = 
    - v(z)
    \big( \frac{v(z)}{\gamma} + \frac{1}{z}(\frac{1}{\gamma} - 1 \big)\big)
    - 
    \frac{1}{z}\int \frac{1}{\tau} dH(\tau)\\
    & = 
    - \frac{ v(z)^2}{\gamma} 
    - \frac{v(z)}{z}(\frac{1}{\gamma} - 1 \big)
    - 
    \frac{1}{z} \int \frac{1}{\tau} dH(\tau).
\end{align*}
Fixing $z = -\lambda$ the quantity of interest then has the form
\begin{align*}
    \Theta^{\Phi}(-\lambda) 
    = 
    - \frac{v(-\lambda)^2}{\gamma} 
    + \frac{v(-\lambda)}{\lambda}(\frac{1}{\gamma} - 1 \big)
    + 
    \frac{1}{\lambda} 
    \int \frac{1}{\tau} dH(\tau),
\end{align*}
which when differentiated with respect to $\lambda$ yields 
\begin{align*}
    \frac{\partial \Theta^{\Phi}(-\lambda)  }{\partial \lambda }
    & =
    \frac{2v(-\lambda)v^{\prime}(-\lambda)}{\gamma} 
    - \frac{1}{\lambda}(\frac{1}{\gamma} - 1 \big)
    \big( \frac{v(-\lambda)}{\lambda} + v^{\prime}(-\lambda)\big)
    - \frac{1}{\lambda^2}\int \frac{1}{\tau} dH(\tau).
\end{align*}
Multiplying the above by $\lambda$ and adding $\Theta^{\Phi}(-\lambda)$ brings us to
\begin{align*}
    \Theta^{\Phi}(-\lambda) 
    + 
    \lambda 
    \frac{\partial \Theta^{\Phi}(-\lambda)  }{\partial \lambda }
    = 
    2 \lambda \frac{v^{\prime}(-\lambda)v(-\lambda) }{\gamma }
    - \frac{v(-\lambda)^2}{\gamma} 
    - (\frac{1}{\gamma} - 1 \big) v^{\prime}(-\lambda).
\end{align*}
Dividing the above by $v(-\lambda)^2$ and adding the limiting variance yields, from Theorem \ref{main:thm}, the limit as $n,d \rightarrow \infty$
\begin{align*}
    & \E_{\beta^{\star}}[
    \E_{\epsilon}[ R(\wh \beta_{\lambda})]
    - 
    R(\beta^{\star})
    ]\\
     & \rightarrow   
     \sigma^2 \frac{1 - \lambda v(-\lambda)}{\lambda v(-\lambda)}
     - 
     \lambda \sigma^2 \frac{ v(-\lambda) - \lambda v^{\prime}(-\lambda)}{ (\lambda v(-\lambda))^2}
     + 
     2  r^2 \lambda \frac{v^{\prime}(-\lambda)}{\gamma v(-\lambda)}
     - \frac{r^2}{ \gamma} - r^2 (\frac{1}{\gamma} - 1 \big) \frac{v^{\prime}(-\lambda)}{ v(-\lambda)^2}\\
     & = 
     \sigma^2 
     \big( \frac{v^{\prime}(-\lambda)}{(v(-\lambda))^2} - 1\big)
     + 
     2 r^2 \lambda \frac{v^{\prime}(-\lambda)}{\gamma v(-\lambda)}
     - \frac{r^2}{ \gamma} + 
      r^2 \lambda \frac{\gamma - 1}{\gamma} \frac{v^{\prime}(-\lambda)}{v(-\lambda)^2}
\end{align*}

\subsection{Strong and Weak Features Model }
\label{sec:ProofRR:TwoBulks}

This section presents the calculations associated to the strong and weak features model. We begin giving the stationary  point equation of the companion transform $v(t)$, after which we explicitly compute the limiting risk with the particular choice of $\Phi(x)$ in this case. Section \ref{sec:TwoBulks:RidgelessLimit:Proof} there after gives explicit form for the companion transform in the ridgeless limit.  Section \ref{sec:CorPreciseProof} gives the proof of Corollary \ref{Cor:PeciseOptimalReg} found within the main body of the manuscript. 

We begin by recalling the limiting spectrum of the covariance $\Sigma$ for the two Bulks Model is $dH(\tau) = \psi_1 \delta_{\rho_1} + \psi_2 \delta_{\rho_2}$. Recall we have $\psi_1 + \psi_2 = 1$ therefore we simply write $\psi_2 = 1-\psi_1$. 
Using the Silverstein equations \eqref{equ:CompanionTransform:StationaryPoint} the companion transform must satisfy 
\begin{align}
\label{equ:TwoBulks:SilverStein}
    \frac{-1}{v(t)}
    = 
    t - \gamma 
    \Big( \frac{\psi_1 \rho_1 }{1 + \rho_1 v(t)} + \frac{(1-\psi_1)\rho_2}{1 + \rho_2 v(t)}
    \Big),
\end{align}
meanwhile the derivative must satisfy
\begin{align}
\label{equ:CompanionDerivative}
    & \frac{1}{(v(t))^2}
    = 
    \frac{1}{v^{\prime}(t)} + \gamma 
    \Big( \frac{\psi_1 \rho_1^2 }{(1 + \rho_1 v(t))^2} + \frac{(1-\psi_1 )\rho_2^2}{(1 + \rho_2 v(t))^2}
    \Big) \\
    & \nonumber \implies 
    v^{\prime}(t) = \Big( 
    \frac{1}{(v(t))^2}
    - 
    \gamma 
    \Big( \frac{\psi_1 \rho_1^2 }{(1 + \rho_1 v(t))^2} + \frac{(1-\psi_1 )\rho_2^2}{(1 + \rho_2 v(t))^2}
    \Big)
    \Big)^{-1}
\end{align}
as such given $v(t)$ we can compute the derivative. Rearranging \eqref{equ:TwoBulks:SilverStein} and denoting $v(t) = v$ the companion transform evaluated at $t$ satisfies
\begin{align*}
    0 
    & = 
    (1+\rho_1 v)(1+\rho_2 v) + 
    t v(1+\rho_1 v)(1+\rho_2 v)
    - \gamma \psi_1\rho_1  v (1+\rho_2 v)
    - \gamma (1-\psi_1 ) \rho_2 v(1+\rho_1 v)\\
    & = 
    t \rho_1 \rho_2 v^3
    +
    ( t (\rho_1 + \rho_2) + (1-\gamma)\rho_1\rho_2) v^2
    +
    (t  + \rho_1 + \rho_2 - \gamma \psi_1 \rho_1 -\gamma (1-\psi_1 )\rho_2 )v 
    +1
\end{align*}
This cubic can then be solved computationally for different choices of $t$. In the case of the ridgeless limit $t \rightarrow 0$ in the overparameterised setting $\gamma > 1$, the above simplifies to a quadratic which can be solved, as shown in Section \ref{sec:TwoBulks:RidgelessLimit:Proof}. 

Now, recall in the strong and weak features model the structure of the ground truth $\beta^{\star}$ is such that $\Phi(x) = \phi_1 \mathbbm{1}_{x = \rho_1} + \phi_2 \mathbbm{1}_{x = \rho_2} $. To compute the limiting risk, specifically the bias, we must then evaluate $\Theta^{\Phi}(-\lambda)  + \lambda \frac{ \partial \Theta^{\Phi}(-\lambda)}{\partial \lambda }$. To this end, we have plugging $\Phi(x)$ into the definition of $\Theta^{\Phi}(z)$ 
\begin{align*}
    \Theta^{\Phi}(z) 
     & = 
    \int \Phi(\tau) 
    \frac{1}{ \tau (1 - \gamma( 1 + z m(z))) - z }
    dH(\tau)\\
     & = 
    \frac{\phi_1 \psi_1 }{  \rho_1(1 - \gamma( 1 + z m(z))) - z }
    +  \frac{\phi_2  (1- \psi_1) }{  \rho_2(1 - \gamma( 1 + z m(z))) - z }\\
    & = 
    \frac{\phi_1 \psi_1 }{-z(1 + \rho_1 v(z))}+     \frac{\phi_2 (1-\psi_1) }{-z(1 + \rho_2 v(z))}
\end{align*}
where on the last equality we used \eqref{equ:STransformToComp} to rewrite the above in terms of the companion transform. Plugging in the regularisation parameter $z=-\lambda$ we then get 
\begin{align*}
    \Theta^{\Phi}(-\lambda)
    = 
    \frac{\phi_1 \psi_1 }{\lambda(1+\rho_1 v(-\lambda))} + \frac{\phi_1 (1-\psi_1) }{\lambda(1+\rho_2 v(-\lambda))}.
\end{align*}
To the end of computing $\frac{ \partial \Theta^{\Phi}(-\lambda)}{\partial \lambda }$, we can differentiate the above to get
\begin{align*}
    \frac{ \partial \Theta^{\Phi}(-\lambda)}{\partial \lambda }
    = 
   - \phi_1 \psi_1  \frac{1 + \rho_1 v(-\lambda) - \lambda \rho_1 v^{\prime}(-\lambda)}
    {  \big( \lambda \rho_1 v(-\lambda) + \lambda \big)^2}
-  \phi_2 (1-\psi_1) \frac{1 + \rho_2 v(-\lambda) - \lambda \rho_2 v^{\prime}(-\lambda)}
    {  \big( \lambda \rho_2 v(-\lambda) + \lambda \big)^2},
\end{align*}
 which yields
\begin{align*}
    \Theta^{\Phi}(-\lambda)  
    + 
    \lambda \frac{ \partial \Theta^{\Phi}(-\lambda)}{\partial \lambda }
    = 
    \phi_1 \psi_1  \frac{ \rho_1 v^{\prime}(-\lambda)}{(\rho_1 v(-\lambda) + 1)^2} + \phi_2 (1-\psi_1) \frac{ \rho_2 v^{\prime}(-\lambda)}{(\rho_2 v(-\lambda) + 1)^2}
\end{align*}
as required. The final form for the limiting risk is then 
\begin{align*}
    & \lim_{n,d \rightarrow \infty} 
    \E_{\beta^{\star}}[
    \E_{\epsilon}[ R(\wh \beta_{\lambda})]
    - 
    R(\beta^{\star})
    ]\\
    & =
     \sigma^2 \frac{1 - \lambda v(-\lambda)}{\lambda v(-\lambda)}
     - 
     \lambda \sigma^2 \frac{ v(-\lambda) - \lambda v^{\prime}(-\lambda)}{ (\lambda v(-\lambda))^2}
     + 
    r^2  \sum_{i=1}^{2} \phi_i \psi_i \frac{ \rho_i v^{\prime}(-\lambda)}{(\rho_i v(-\lambda) + 1)^2 v(-\lambda)^2}
    \\
    & = 
    - \sigma^2 
    + 
    \sigma^2 \frac{v^{\prime}(-\lambda)}{(v(-\lambda))^2}
    + r^2  \sum_{i=1}^{2} \phi_i \psi_i \frac{\rho_i v^{\prime}(-\lambda)}{( v(-\lambda)^2 (\rho_i v(-\lambda) + 1)^2}.
\end{align*}

\subsubsection{Ridgeless Limit}
\label{sec:TwoBulks:RidgelessLimit:Proof}
To consider the Ridgeless limit $t\rightarrow 0$ of the companion transform $v(t)$, some care must be taken about which regime $\gamma <1$ or $\gamma >1$ we are in.
\paragraph{Underparameterised $\gamma < 1$}
Following  the proof of Lemma 6.2 in \cite{dobriban2018high} we  have in the underparameterised case $\gamma <1$ the limit $\lim_{t \rightarrow 0_{-}} t v(t) =  1-\gamma$. 

\paragraph{Overparameterised $\gamma > 1$} 
Following the proof of Lemma 6.2 in \cite{dobriban2018high} when $\gamma >1$ we have the limit $\lim_{t \rightarrow 0_{-}} v(t) = v(0)$.
From dominated convergence theorem we can take the limit in the Silverstein equation \eqref{equ:TwoBulks:SilverStein} to arrive at the quadratic
\begin{align*}
    0 
    & = 
    (1+\rho_1 v)(1+\rho_2 v)
    - \gamma \psi_1 \rho_1  v (1+\rho_2 v)
    - \gamma (1-\psi_1) \rho_2 v(1+\rho_1 v) \\
    & = 
    (1-\gamma)\rho_1\rho_2 v^2
    +
    (\rho_1 + \rho_2 - \gamma \psi_1 \rho_1 -\gamma (1-\psi_1)\rho_2 )v 
    +1
\end{align*}
Solving for $v$ with the quadratic formula immediately gives
\begin{align}
\label{equ:TwoBulks:CompanionTransform:Ridgeless}
      v(0) 
    = 
    \frac{ - (\rho_1 + \rho_2 - \gamma \psi_1 \rho_1 -\gamma (1-\psi_1 )\rho_2 ) - 
    \sqrt{ (\rho_1 + \rho_2 - \gamma \psi_1\rho_1 -\gamma (1-\psi_1)\rho_2 )^2 - 4 (1 - \gamma ) \rho_1 \rho_2 }}{2(1-\gamma)\rho_1\rho_2  }
    .
\end{align}
Recall from \cite{silverstein1995analysis} we have that $v(z) \in \mathcal{S}$, as such we take the sign above which yields a non-negative quantity. Noting we we focus on the regime where $\gamma > 1$, we see for the above to be non-negative we require the numerator to be negative, and thus, we take the negative sign.

\subsubsection{Proof of Corollary \ref{Cor:PeciseOptimalReg}}
\label{sec:CorPreciseProof}
In this section we provide the proof of Corollary \ref{Cor:PeciseOptimalReg}. The proof essentially requires computing the companion transform in this case and checking the sign of the asymptotic derivative at zero i.e. $R^{\prime}_{\mathrm{Asym}}(0)$.  Let us begin by noting that when $\gamma =2$ and $\psi_1 = \psi_2 = 1/2$ that the companion transform at zero is $v(0) = 1/\sqrt{\rho_1 \rho_2}$. Let us now compute quantities related to both the first derivative $v^{\prime}(0)$ and second derivative $v^{\prime\prime}(0)$. Using  \eqref{equ:CompanionDerivative}, we can, by dividing both sides by $v(t)^2$ and taking $t \rightarrow 0$, get
\begin{align*}
\frac{v^{\prime}(0) }{v(0)^2} =  \Big( 1 - \frac{\rho_1 + \rho_2}{ (\sqrt{\rho_1}  + \sqrt{\rho_2})^2} \Big)^{-1} 
= \frac{1}{2}\big( \sqrt{\frac{\rho_1}{\rho_2}} + \sqrt{\frac{\rho_2}{\rho_1}}\big) + 1
\end{align*}
Meanwhile, recall by differentiating both sides of the silverstein equations \eqref{equ:CompanionTransform:StationaryPoint} in $t$ we can get
\begin{align*}
 \frac{v^{\prime}(t)}{v(t)^2}
 = 
 1 + \gamma v^{\prime}(t) \int \frac{\tau^2}{(1 + \tau v(t))^2} dH(\tau).
 \end{align*}
 Therefore, if we differentiate once more we get
 \begin{align*}
 	\frac{v^{\prime\prime} (t) }{v(t)^2 } - 2 \frac{v^{\prime}(t)^2}{(v(t))^3} = \gamma v^{\prime\prime}(t) \int \frac{\tau^2}{(1 + \tau v(t))^2} dH(\tau)
 	 - 2 \gamma v^{\prime}(t)^2 \int \frac{\tau^3 }{(1+\tau v(t))^3} d H(\tau),
 \end{align*}
 and thus, multiplying through by $v(t)^3/v^{\prime}(t)^2$ and rearranging we arrive at
 \begin{align*}
 	\frac{v^{\prime\prime}(t) v(t) }{v^{\prime}(t)^2  } 
 	\Big[ 1 - \gamma \int \frac{\tau^2 v(z)^2 }{(1 + \tau v(z))^2} dH(\tau)\Big]
 	= 
 	2 \Big[ 1 - \gamma \int \frac{\tau^3 v(z)^3}{(1 + \tau v(z))^3} d H(\tau) \Big].
 \end{align*}
Furthermore, noting that $1 - \gamma \int \frac{\tau^2 v(z)^2 }{(1 + \tau v(z))^2} dH(\tau) = \frac{v^{\prime}(t)}{v(t)^2}$ means we get the following equality for the second derivative
 \begin{align*}
 v^{\prime\prime}(t) 
 = 
 2 \Big[ 1 - \gamma \int \frac{\tau^3 v(t)^3}{(1 + \tau v(t))^3} d H(\tau) \Big]
 \Big( \frac{v^{\prime}(t)}{v(t)} \Big)^3.
 \end{align*}
 Taking $t \rightarrow 0$ and plugging in the defintion of $v(0)$ yields the following, which will be required for the proof
 \begin{align*}
 v^{\prime\prime}(0) = 
 2 \Big[ 1 - \frac{\rho_1^{3/2} + \rho_2^{3/2}}{(\sqrt{\rho_1} + \sqrt{\rho_2})^3} \Big]
 \Big( \frac{v^{\prime}(0)}{v(0)} \Big)^3.
 \end{align*}
 
 Now, let us compute the derivative of the asymmptotic risk $R_{\mathrm{Asymm}}(\lambda)$ for the strong and weak features model. Bringing together the Bias and Variance terms, differentiating through by $\lambda$ and dividing by $\sigma^2$ we get
 \begin{align*}
\frac{1}{\sigma^2} 
R^{\prime}_{\mathrm{Asym}}(\lambda)
&  = 
\Big(  2 \frac{(v^{\prime}(-\lambda))^2 }{(v(-\lambda))^3}  - \frac{ v^{\prime\prime}(-\lambda)}{(v(-\lambda)^2)} \Big)  
\Big( 1 + \frac{r^2}{\sigma^2} \sum_{i=1}^{2}\frac{\phi_i \psi_i \rho_i}{(\rho_i v(-\lambda) + 1)^2}
\Big)\\
&\quad\quad 
+ 2 
\Big(\frac{v^{\prime}(-\lambda)}{v(-\lambda) }
\Big)^2 
\frac{r^2}{\sigma^2} \sum_{i=1}^{2}\frac{\phi_i \psi_i \rho_i^2 }{(\rho_i v(-\lambda) + 1)^3} 
 \end{align*}
 Taking $\lambda \rightarrow 0$ 
 and plugging in $\psi_1 = \psi_2 = 1/2$, $\psi_1 \phi_1 + \psi_2 \phi_2 = 1$, $\phi_1 + \phi_2 = 2$ as well as $v(0)$ into $\rho_i v(0)$ for $i=1,2$ yields
 \begin{align*}
\frac{1}{\sigma^2} 
R^{\prime}_{\mathrm{Asym}}(0)
 & = 
\Big(  2 \frac{v^{\prime}(0)^2 }{v(0 )^3}  - \frac{ v^{\prime\prime}(0 )}{v(0 )^2)} \Big)  
\Big( 1 + \frac{r^2}{\sigma^2} \frac{ \rho_1 \rho_2 }{(\sqrt{\rho_1} + \sqrt{\rho_2} )^2}
\Big)
+ 
\Big(\frac{v^{\prime}(0)}{v(0) }
\Big)^2 
\frac{r^2}{\sigma^2}( \rho_1 \rho_2)^{3/2} \frac{\phi_1 \sqrt{\rho_1} + \phi_2 \sqrt{\rho_2}}{(\sqrt{\rho_1} + \sqrt{\rho_2})^3 } .
 \end{align*}
 Let us now plug in the second derivative $v^{\prime\prime}(0)$. In particular, note that we can write 
 
\begin{align*}
2\frac{v^{\prime}(0)^2 }{v(0)^3} - \frac{ v^{\prime\prime}(0)}{v(0)^2}
& = 
2 \big(\frac{v^{\prime}(0)}{v(0)}\big)^2 
\frac{1}{v(0)} 
\Big(  1 - \Big( 1 - \frac{\rho_1^{3/2} + \rho_2^{3/2} } {(\sqrt{\rho_1} + \sqrt{\rho_2})^3} \Big)  \frac{v^{\prime}(0)}{v(0)^2} 
\Big) \\
& = 
- \Big(\frac{v^{\prime}(0)}{v(0)}\Big)^2 
\frac{1}{v(0)} 
\end{align*}
where on the second equality we have used the equality for $\frac{v^{\prime}(0)}{v(0)^2}$ from above to note that
\begin{align*}
& 1  - \Big( 1 - \frac{\rho_1^{3/2} + \rho_2^{3/2} } {(\sqrt{\rho_1} + \sqrt{\rho_2})^3} \Big)  \frac{v^{\prime}(0)}{v(0)^2} \\
& = 
\frac{\rho_1^{3/2} + \rho_2^{3/2} }{(\sqrt{\rho_1} + \sqrt{\rho_2})^3} - \frac{1}{2} \sqrt{\frac{\rho_1}{\rho_2}} - \frac{1}{2} \sqrt{\frac{\rho_2}{\rho_1}} 
+
\frac{\rho_1^{3/2} + \rho_2^{3/2} }{(\sqrt{\rho_1} + \sqrt{\rho_2})^3} \big( \sqrt{\frac{\rho_1}{\rho_2}} + \sqrt{\frac{\rho_2}{\rho_1}}) \frac{1}{2} \\
& = 
\frac{ (\rho_1^{3/2} + \rho_2^{3/2} )( 2 \sqrt{\rho_1 \rho_2} + \rho_1 + \rho_2) - (\rho_1 + \rho_2)(\sqrt{\rho_1} + \sqrt{\rho_2})^3 }{2 \sqrt{\rho_1 \rho_2}(\sqrt{\rho_1} + \sqrt{\rho_2})^3}\\
& = 
\frac{ (\rho_1^{3/2} + \rho_2^{3/2} )( 2 \sqrt{\rho_1 \rho_2} + \rho_1 + \rho_2) - (\rho_1 + \rho_2)(\sqrt{\rho_1} + \sqrt{\rho_2})^3 }{2 \sqrt{\rho_1 \rho_2}(\sqrt{\rho_1} + \sqrt{\rho_2})^3}\\
& = 
\frac{ (\rho_1^{3/2} + \rho_2^{3/2} ) - (\rho_1 + \rho_2)(\sqrt{\rho_1} + \sqrt{\rho_2})}{2 \sqrt{\rho_1 \rho_2}(\sqrt{\rho_1} + \sqrt{\rho_2})}\\
& = 
- \frac{1}{2}.
\end{align*}
Returning to the derivative of the asymptotic risk $R^{\prime}_{\mathrm{Asym}}(0)$, factoring out $\Big(\frac{v^{\prime}(0)}{v(0)}\Big)^2 $ and plugging in the definition of $v(0)$ gives 
\begin{align*}
\frac{1}{\sigma^2} 
R^{\prime}_{\mathrm{Asym}}(0)
=
\Big(\frac{v^{\prime}(0)}{v(0)}\Big)^2 \sqrt{\rho_1 \rho_2} 
\Big[ 
-1 
+ 
\frac{r^2}{\sigma^2} \frac{\rho_1 \rho_2}{(\sqrt{\rho_1} + \sqrt{\rho_2})^2} 
\Big(
\frac{\phi_1 \rho_1 + \phi_2 \rho_2}{\sqrt{\rho_1} +\sqrt{\rho_2} }  - 1 \Big)
\Big]
\end{align*}
It is then clear that the sign of $R^{\prime}_{\mathrm{Asym}}(0)$ is governed by the quantity in the square brackets. This then yields the result.

\section{Proof of Lemma \ref{lem:Limits}}
\label{app:proof-lemma-rmt}
In this section we provide the proof for Lemma \ref{lem:Limits}. We recall that the limits \eqref{equ:lem:limits:1} and \eqref{equ:lem:limits:2} have been computed previously. 
In particular, Lemma  2.2 of \cite{ledoit2011eigenvectors} (the roles of $d,n$ are swapped in their work, and thus, one must swap $\gamma$ with $1/\gamma$) shows 
\begin{align*}
    \frac{1}{d}\trace\Big( \Big( \frac{X^{\top}X}{n} + \lambda I \Big)^{-1} \Sigma \Big) 
    \rightarrow 
    \gamma^{-1}\Big( \frac{1}{1 - \gamma(1-\lambda m(-\lambda)) } - 1 \Big)
\end{align*}
Meanwhile  Lemma 7.4 of \cite{dobriban2018high} shows 
\begin{align*}
    \frac{1}{d}\trace\Big( \Big( \frac{X^{\top}X}{n} + \lambda I \Big)^{-2} \Sigma \Big) \rightarrow 
    \frac{m(-\lambda) - \lambda m^{\prime}(-\lambda)}{(1-\gamma(1-\lambda m(-\lambda)))^2}
\end{align*}
This leaves us to show the limit \eqref{equ:lem:limits:3}, for which we build upon the techniques \cite{ledoit2011eigenvectors} as well as \cite{chen2011regularized}. 

We begin with the decomposition. Recall since the covariates are multivariate Gaussians, they can be rewritten as $X = Z \Sigma^{1/2}$ where $Z \in \R^{n \times d}$ is a matrix of independent standard normal Gaussian random variables. For $i =1,\dots,n$ the associated row in $X$ is then denoted $X_i = Z_i \Sigma^{1/2}$. As such $X^{\top} X = \sum_{i=1}^{n} X_{i}^{\top} X_i = \sum_{i=1}^{n} \Sigma^{1/2} Z_i^{\top} Z_i \Sigma^{1/2}$. Let us then define $R_{i}(z) = \big( \frac{X^{\top} X}{n} - \frac{X_i^{\top} X_i}{n}  -z I \big)^{-1}$. Using the Sherman-Morrison formula we then get 
\begin{align}
\label{equ:ShermanMorr}
    R(z) = (\frac{X^{\top}X}{n} - z I)^{-1}
    = 
    R_{i}(z) 
    - 
    \frac{1}{n} \frac{ R_{i}(z)\Sigma^{1/2} Z_i^{\top} Z_{i} \Sigma^{1/2} R_{i}(z)}{1 
    + \frac{1}{n} Z_i \Sigma^{1/2} R_{i}(z) \Sigma^{1/2} 
    Z_{i}^{\top} }
\end{align}
Moreover we have
\begin{align*}
   \frac{1}{n} \sum_{i=1}^{n} \Sigma^{1/2} Z_i^{\top} Z_{i} \Sigma^{1/2}R(z)
   =
   \frac{X^{\top} X}{n} R(z) = (\frac{X^{\top} X}{n} - zI) R(z) + z R(z) = I + z R(z) 
\end{align*}
Multiplying the above on the left by $\Phi(\Sigma)R(z)$, taking the trace and dividing by $d$ yields  
\begin{align*}
    \frac{1}{d} 
    \trace\big( \Phi(\Sigma)R(z)\big) 
    + 
    z \frac{1}{d} 
    \trace\big( \Phi(\Sigma)R(z)^2\big) 
   &  = 
    \frac{1}{d} \sum_{i=1}^{n}  \frac{1}{n} 
    Z_{i} \Sigma^{1/2}R(z)\Phi(\Sigma)R(z)\Sigma^{1/2}Z_i^{\top} \\
    & = 
    \frac{1}{d} \sum_{i=1}^{n} 
    \frac{1}{n} \frac{ Z_{i} \Sigma^{1/2}R_i(z)\Phi(\Sigma)R_i(z)\Sigma^{1/2}Z_i^{\top}}{
    \big( 1  + \frac{1}{n} Z_i \Sigma^{1/2} R_{i}(z) \Sigma^{1/2}Z_{i}^{\top}\big)^2}
\end{align*}
where for $i=1,\dots,n$ we have plugged in \eqref{equ:ShermanMorr}  twice into for $R(z)$ to get
\begin{align*}
     & Z_{i} \Sigma^{1/2}R(z)\Phi(\Sigma)R(z)\Sigma^{1/2}Z_i^{\top}\\
     & = 
    Z_{i} \Sigma^{1/2}R_i(z)\Phi(\Sigma)R(z)\Sigma^{1/2}Z_i^{\top}
    - 
    \frac{1}{n} 
    \frac{Z_{i} \Sigma^{1/2}R_{i}(z)\Sigma^{1/2} Z_i^{\top} Z_{i} \Sigma^{1/2} R_{i}(z)\Phi(\Sigma)R(z)\Sigma^{1/2}Z_i^{\top}}{1 
    + \frac{1}{n} Z_i \Sigma^{1/2} R_{i}(z) \Sigma^{1/2}Z_{i}^{\top} } \\
    & =
    \frac{ Z_{i} \Sigma^{1/2}R_i(z)\Phi(\Sigma)R(z)\Sigma^{1/2}Z_i^{\top}}{1 
    + \frac{1}{n} Z_i \Sigma^{1/2} R_{i}(z) \Sigma^{1/2}Z_{i}^{\top} } \\
    & = 
    \frac{1}{ 1  + \frac{1}{n} Z_i \Sigma^{1/2} R_{i}(z) \Sigma^{1/2}Z_{i}^{\top} } \\
    & \quad \times \Big[ 
    Z_{i} \Sigma^{1/2}R_i(z)\Phi(\Sigma)R_i(z)\Sigma^{1/2}Z_i^{\top}
    - 
    \frac{1}{n}
    \frac{ Z_{i} \Sigma^{1/2}R_i(z)\Phi(\Sigma) R_{i}(z)\Sigma^{1/2} Z_i^{\top} Z_{i} \Sigma^{1/2} R_{i}(z) 
    \Sigma^{1/2}Z_i^{\top}}{  1 + \frac{1}{n} Z_i \Sigma^{1/2} R_{i}(z) \Sigma^{1/2}Z_{i}^{\top}}
    \Big]\\
    & = 
    \frac{ Z_{i} \Sigma^{1/2}R_i(z)\Phi(\Sigma)R_i(z)\Sigma^{1/2}Z_i^{\top}}{
    \big( 1  + \frac{1}{n} Z_i \Sigma^{1/2} R_{i}(z) \Sigma^{1/2}Z_{i}^{\top}\big)^2}.
\end{align*}
Choosing $z = -\lambda$ we then have that
\begin{align}
\label{equ:LimitObject}
    \frac{1}{d} 
    \trace\big( \Phi(\Sigma)R(-\lambda)\big) 
    - 
    \lambda \frac{1}{d} 
    \trace\big( \Phi(\Sigma)R(-\lambda)^2\big)  
    = 
    \frac{1}{d} \sum_{i=1}^{n} 
    \frac{ \frac{1}{n} \trace\big(\Sigma^{1/2} R(-\lambda) \Phi(\Sigma) R(-\lambda) \Sigma^{1/2}\big)}{
    \big( 1 + \frac{1}{n} \trace(\Sigma R(-\lambda) ) \big)^2 }
    + 
    \delta
\end{align}
where the error term $\delta = \delta_1 + \delta_2 + \delta_3 + \delta_4$ such that 
\begin{align*}
    & \delta_1  = 
    \frac{1}{d} \sum_{i=1}^{n} 
    \frac{ \frac{1}{n} \trace\big(\Sigma^{1/2} R_i(-\lambda) \Phi(\Sigma) R_i(-\lambda) \Sigma^{1/2}\big) 
    - \frac{1}{n} \trace\big(\Sigma^{1/2} R(-\lambda) \Phi(\Sigma) R(-\lambda) \Sigma^{1/2}\big)
    }{\big( 1 + \frac{1}{n} \trace(\Sigma R(-\lambda) ) \big)^2 }
    \\
    & \delta_2 = 
    \frac{1}{d} \sum_{i=1}^{n} 
    \frac{1}{n} \trace\big(\Sigma^{1/2} R_i(-\lambda) \Phi(\Sigma) R_i(-\lambda) \Sigma^{1/2}\big) 
    \Big( 
    \frac{ 1 }{\big( 1 + \frac{1}{n} \trace(\Sigma R_i(-\lambda) ) \big)^2 }
    - 
    \frac{ 1 }{\big( 1 + \frac{1}{n} \trace(\Sigma R(-\lambda) ) \big)^2 }
    \Big)\\
    & \delta_3 = 
    \frac{1}{d} \sum_{i=1}^{n} 
    \frac{1}{n} \trace\big(\Sigma^{1/2} R_i(-\lambda) \Phi(\Sigma) R_i(-\lambda) \Sigma^{1/2}\big) \\
    & \quad\quad\quad\quad\quad\quad \times 
    \Big( 
    \frac{ 1 }{\big( 1  + \frac{1}{n} Z_i \Sigma^{1/2} R_{i}(-\lambda) \Sigma^{1/2}Z_{i}^{\top}\big)^2 }
    - 
    \frac{ 1 }{\big( 1 + \frac{1}{n} \trace(\Sigma R_i(-\lambda) ) \big)^2 } 
    \Big)\\
    & \delta_4
    =
    \frac{1}{d} \sum_{i=1}^{n} 
     \frac{ \frac{1}{n} Z_{i} \Sigma^{1/2}R_i(-\lambda)\Phi(\Sigma)R_i(-\lambda)\Sigma^{1/2}Z_i^{\top} 
     - 
     \frac{1}{n} \trace\big(\Sigma^{1/2} R_i(-\lambda) \Phi(\Sigma) R_i(-\lambda) \Sigma^{1/2}\big)
     }{
    \big( 1  + \frac{1}{n} Z_i \Sigma^{1/2} R_{i}(z) \Sigma^{1/2}Z_{i}^{\top}\big)^2}
\end{align*}
As shown in section \ref{sec:ErrorTermsConvergence} the error terms $|\delta_1|,|\delta_2|,|\delta_3|,|\delta_4| \rightarrow 0$ almost surely as $n,d \rightarrow \infty$. It is now a matter of computing the limits of the remaining terms. As discussed previously the limit of $\frac{1}{d} \trace(\Sigma R(-\lambda))$ is known from \cite{ledoit2011eigenvectors}. From the same work it is also known that
\begin{align}
\label{equ:limit:phi}
    \frac{1}{d}\trace\big( \Phi(\Sigma) R(-\lambda) \big) 
    \rightarrow \Theta^{\Phi}(-\lambda).
\end{align}
That leaves us to compute the limit of $\frac{1}{d} \trace\big(\Phi(\Sigma) R(-\lambda)^2 \big)$.
If we are to write $f_{d}(\lambda) = \frac{1}{d}\trace\big( \Phi(\Sigma) R(-\lambda) \big) $ then note the derivative with respect to $\lambda$ is $f^{\prime}_{d}(\lambda) =  - \frac{1}{d} \trace\big(\Phi(\Sigma) R(-\lambda)^2 \big)$. We wish to now study the limit of the $f^{\prime}_{d}(\lambda)$ through the limit of $f_{d}(\lambda)$. To do so we will follow the steps in \cite{dobriban2018high}, which will require some definitions and the following theorem.

Let $D$ be a domain, i.e. a connected open set of $\mathbb{C}$. A function $f: D \rightarrow \mathbb{C}$ is called analytic on $D$ if it is differentiable as a function of the complex variable $z$ on $D$. The following key theorem, sometimes known as Vitali's Theorem, ensures that the derivatives of converging analytic functions also converge.  
\begin{theorem}[Lemma 2.14 in \cite{bai2010spectral}]
\label{thm:Vitalis}
Let $f_1,f_2,\dots$ be analytic on the domain $D$, satisfying $|f_n(z)| \leq M$ for every $n$ and $z$ in $D$. Suppose that there is an analytic function $f$ on $D$ such that $f_{n}(z) \rightarrow f(z)$ for all $z \in D$. Then it also holds that $f^{\prime}_{n}(z) \rightarrow f^{\prime}(z)$ for all $z \in D$ 
\end{theorem}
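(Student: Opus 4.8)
The plan is to deduce this from the theory of normal families together with Cauchy's integral formula for derivatives. The guiding principle is that uniform boundedness upgrades \emph{pointwise} convergence of analytic functions to \emph{locally uniform} convergence, and that locally uniform convergence of analytic functions automatically transfers to their derivatives. First I would invoke Montel's theorem: since $\{f_n\}$ is uniformly bounded on $D$ by $M$, it is a normal family, so every subsequence of $(f_n)$ admits a further subsequence converging uniformly on compact subsets of $D$ to some limit, which is itself analytic on $D$ by Weierstrass's theorem. Next I would argue that every such subsequential limit coincides with $f$: the hypothesis $f_n(z)\to f(z)$ for all $z\in D$ forces the subsequential limit to agree with $f$ pointwise, hence everywhere. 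A standard subsequence argument then upgrades this to $f_n\to f$ uniformly on every compact $K\subset D$: were this false, there would be a compact $K$, an $\varepsilon>0$, and a subsequence with $\sup_K|f_{n_k}-f|\ge\varepsilon$ for all $k$, contradicting the existence of a further subsequence converging uniformly on $K$ to $f$.

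With locally uniform convergence in hand, fix $z_0\in D$ and $r>0$ with $\overline{B(z_0,r)}\subset D$. Cauchy's integral formula for the derivative on the circle $|w-z_0|=r$ gives, for $z\in B(z_0,r/2)$,
\[
  f_n'(z)-f'(z)=\frac{1}{2\pi i}\oint_{|w-z_0|=r}\frac{f_n(w)-f(w)}{(w-z)^2}\,dw,
\]
whence $|f_n'(z)-f'(z)|\le \frac{4}{r}\sup_{|w-z_0|=r}|f_n(w)-f(w)|\to 0$, since $\{|w-z_0|=r\}$ is a compact subset of $D$. As $z_0\in D$ is arbitrary, $f_n'(z)\to f'(z)$ for every $z\in D$ (in fact locally uniformly), which is the assertion.

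The main obstacle is the passage from pointwise to locally uniform convergence; if one wants an argument that does not cite Montel's theorem as a black box, one can instead note that the same Cauchy estimate applied to the $f_n$ themselves bounds the $f_n'$ uniformly on compact subsets of $D$, which yields equicontinuity on compacta, and then apply Arzelà--Ascoli to extract the required convergent subsequences. The derivative-convergence step itself is a routine one-line contour-integral estimate and poses no difficulty.
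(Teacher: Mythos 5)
The paper does not prove this statement; it cites it as Lemma~2.14 of Bai and Silverstein and uses it as a black box, so there is no in-paper proof to compare against. Your argument is a correct and complete self-contained proof, and it is the standard one (Vitali--Porter): Montel's theorem plus Weierstrass to upgrade pointwise convergence of a uniformly bounded family to locally uniform convergence, followed by a Cauchy-integral estimate to transfer locally uniform convergence to the derivatives. The subsequence argument closing the gap between subsequential and full convergence is stated carefully, and the contour estimate (radius $r$, evaluation on $B(z_0, r/2)$, giving the factor $4/r$) is correct. Your remark that one can replace Montel by Cauchy estimates for $f_n'$ plus Arzel\`a--Ascoli is also accurate and is essentially how Montel's theorem is proved, so the two routes are equivalent. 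Nothing to fix.
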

Now we have from \cite{ledoit2011eigenvectors}
\begin{align*}
    f_{d}(\lambda) 
    \rightarrow 
    \int \Phi(\tau) \frac{1}{\tau (1 - \gamma(1-\lambda m(-\lambda))) + \lambda} d H(\tau)
\end{align*}
for all $\lambda \in \mathcal{S} : = \{u + i v: v \not= 0, \text{ or } v = 0, u > 0\}$. Checking the conditions of Theorem \ref{thm:Vitalis} we have that $f_{d}(\lambda)$ is an analytic function of $\lambda$ on $\mathcal{S}$ and is bounded $|f_d(\lambda)| \leq \frac{\|\Phi(\Sigma)\|_2}{\lambda}$. To apply Theorem \ref{thm:Vitalis} it suffices to show that the limit $\Theta^{\Phi}(-\lambda)$ is analytical. To this end we invoke Morera's theorem which states if 
\begin{align*}
    \oint_{\gamma} \Theta^{\Phi}(-\lambda) d \lambda  = 0
\end{align*}
for any closed curve $\gamma$ in the region $\mathcal{S}$ then $\Theta^{\Phi}(-\lambda)$ is analytic. We see this is the case by applying Fubini's Theorem as follows 
\begin{align*}
    \oint_{\gamma} \Theta^{\Phi}(-\lambda) d \lambda 
    & = 
    \oint_{\gamma} \int \Phi(\tau) \frac{1}{\tau (1 - \gamma(1-\lambda m(-\lambda))) + \lambda} d H(\tau) d\lambda \\
    & = 
    \int \Phi(\tau) \underbrace{ 
    \oint_{\gamma}   \frac{1}{\tau (1 - \gamma(1-\lambda m(-\lambda))) + \lambda} d\lambda}_{ = 0} d H(\tau)
    = 0
\end{align*}
and noting that the inner integral is zero from Cauchy Theorem  as $\frac{1}{\tau (1 - \gamma(1-\lambda m(-\lambda))) + \lambda}$ is an analytical function of $\lambda$ in $\mathcal{S}$ for any $\tau \in [h_1,h_2]$. By Theorem \ref{thm:Vitalis} we have that 
\begin{align}
\label{equ:limit:phiprime}
    - \frac{1}{d} \trace\big(\Phi(\Sigma) R(-\lambda)^2 \big) 
    = f^{\prime}_{d}(-\lambda) \rightarrow \frac{\partial \Theta^{\Phi}(-\lambda)}{\partial \lambda}.
\end{align} 
The  final limit \eqref{equ:lem:limits:3} is arrived at by considering the limit as $d,n\rightarrow \infty$ of \eqref{equ:LimitObject}. Specifically, with the fact that $\delta \rightarrow 0$, bringing together \eqref{equ:limit:phi}, \eqref{equ:limit:phiprime} and \eqref{equ:lem:limits:1}. Noting that \eqref{equ:lem:limits:1} is applied to the square of $1 + \frac{1}{n} \trace\big( \Sigma R(-\lambda)\big) = 1 + \gamma \frac{1}{d} \trace\big( \Sigma R(-\lambda)\big) \rightarrow \frac{1}{1 - \gamma(1-\lambda m(-\lambda))}$.

\subsection{Showing $\delta \rightarrow 0$}
\label{sec:ErrorTermsConvergence}
To analyse these quantities we introduce the following concentration inequality from Lemma A.2 of \cite{paul2007asymptotics} with $\delta = 1/3$.
\begin{lemma}
\label{lem:Conc}
Suppose $y$ is $d-$dimensional Gaussian random vector $y \sim \mathcal{N}(0,I)$ and $C \in \R^{d \times d}$ is a symmetric matrix such that $\| C \| \leq L$. Then for all $0 < t < L$, 
\begin{align*}
    \P\big( \frac{1}{d} | y C y^{\top} - \trace(C)| > t\big) \leq 
    2 \exp
    \Big\{
    - \frac{p t^2}{6 L^2}
    \Big\}.
\end{align*}
\end{lemma}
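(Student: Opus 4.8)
The plan is to prove Lemma~\ref{lem:Conc} directly, since it is a self-contained statement about Gaussian quadratic forms; the natural route is a Chernoff bound on the weighted chi-square representation of $y C y^{\top}$. First I would diagonalise the symmetric matrix as $C = O\,\mathrm{diag}(\mu_1,\dots,\mu_d)\,O^{\top}$ with $O$ orthogonal and $|\mu_j| \le \|C\| \le L$ for each $j$. By rotational invariance of the standard Gaussian, $g := yO$ has the same distribution as $y$, so $y C y^{\top} = \sum_{j=1}^{d}\mu_j g_j^2$, and since $\E[g_j^2]=1$,
\begin{align*}
  y C y^{\top} - \trace(C) = \sum_{j=1}^{d}\mu_j(g_j^2-1),
\end{align*}
an independent sum of centred $\chi^2_1$ terms. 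The only spectral information I keep is $\max_j|\mu_j|\le L$ and $\sum_j\mu_j^2 = \|C\|_F^2 \le d\|C\|^2 \le dL^2$.

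Then I would run the standard Bernstein-type computation. For $|\theta|<1/(2L)$ the cumulant generating function is exactly $\log\E\exp\{\theta\mu_j(g_j^2-1)\} = -\theta\mu_j - \tfrac12\log(1-2\theta\mu_j)$, which a Taylor estimate bounds by $\theta^2\mu_j^2/(1-2\theta L)$ (the right-hand side being dominated by $c_0\theta^2\mu_j^2$ once, say, $\theta \le 1/(4L)$). Summing over $j$, using $\sum_j\mu_j^2\le dL^2$, and applying Markov's inequality to $\exp\{\theta\sum_j\mu_j(g_j^2-1)\}$ yields $\P\big(\sum_j\mu_j(g_j^2-1)\ge dt\big)\le \exp\{-\theta dt + c_0\theta^2 dL^2\}$ for admissible $\theta$. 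Optimising, the unconstrained minimiser is $\theta^{\star}=t/(2c_0L^2)$; the hypothesis $t<L$ is exactly what keeps $\theta^{\star}$ admissible (for the appropriate $c_0$), so the bound reduces to the sub-Gaussian rate $\exp\{-dt^2/(4c_0L^2)\}$ rather than a slower sub-exponential one. The lower tail is identical (the admissibility constraint even disappears, the CGF being finite for all $\theta>0$ there), and a union bound over the two tails contributes the factor $2$.

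I do not expect a genuine obstacle here: the only delicate point is tracking the absolute constant in the cumulant estimate carefully, so that the range $0<t<L$ is precisely what maintains admissibility and the final denominator comes out as the stated $6$ (equivalently, one may invoke the Hanson--Wright inequality with $\|C\|_F^2\le dL^2$ and $\|C\|\le L$, the constraint $t<L$ again removing its sub-exponential branch). Indeed the lemma is already quoted verbatim as Lemma~A.2 of~\cite{paul2007asymptotics} with $\delta=1/3$, so a direct citation is also legitimate, and the sketch above is just the self-contained version of that result.
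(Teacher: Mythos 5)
Your argument is sound, and it is worth noting that the paper itself does \emph{not} prove this lemma: it simply imports it as Lemma~A.2 of Paul~(2007) with $\delta=1/3$. Your self-contained sketch is therefore a genuinely different route, and it is the right one: diagonalise $C$, use rotation invariance to write $yCy^\top-\trace(C)=\sum_j\mu_j(g_j^2-1)$ as an independent sum of centred weighted $\chi^2_1$'s, control the cumulant generating function by $\theta^2\mu_j^2/(1-2\theta L)$, and run a Chernoff/Bernstein optimisation where $\sum_j\mu_j^2\le dL^2$ and $|\mu_j|\le L$ are the only inputs and the union bound over the two tails gives the factor~$2$. This is precisely how Paul's Lemma~A.2 is proved, and the observation that $t<L$ keeps you in the sub-Gaussian (rather than sub-exponential) regime is the correct intuition --- it is also what makes your Hanson--Wright remark valid. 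The one place you are (rightly) cautious is the absolute constant: with the crude bound $1/(1-2\theta L)\le c_0$ for $\theta$ below the admissibility threshold, the optimiser $\theta^\star=t/(2c_0L^2)$ and the resulting rate $\exp\{-dt^2/(4c_0L^2)\}$ do not obviously reproduce the denominator $6$ together with the range $0<t<L$; one gets e.g.\ $8$ with $c_0=2$, $\theta\le1/(4L)$, or $6$ with $c_0=3/2$ at the price of a tighter admissibility range $t\le L/2$. This is not a gap in your approach so much as a reflection of the fact that the lemma is quoted, not derived, in the paper (note also the typo $p$ for $d$ in the displayed exponent, inherited from Paul's notation). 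In summary: either cite Paul's Lemma~A.2 directly, as the paper does, or carry out your cumulant estimate with the exact $\delta$-dependent bookkeeping from that reference; both close the argument, and your sketch supplies the mechanism the citation leaves implicit.
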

Furthermore, we will use the fact that the maximal eigenvalues are upper bounded  
\begin{align*}
    \|R(-\lambda)\|_2 \leq \frac{1}{\lambda}
    \quad 
    \text{ and } 
    \max_{1 \leq i \leq n }\|R_{i}(-\lambda)\|_2 \leq \frac{1}{\lambda}
\end{align*}
We proceed to show that each of the error $\delta_1,\delta_2,\delta_3,\delta_4$ converge to zero almost surely. 

Begin with $\delta_1$. For $i = 1,\dots, n$  by adding and subtracting $\trace(\Sigma^{1/2} R(z) \Phi(\Sigma) R_i(z) \Sigma^{1/2})$ we can decompose 
\begin{align*}
    & \trace\big(\Sigma^{1/2} R_i(-\lambda) \Phi(\Sigma) R_i(-\lambda) \Sigma^{1/2}\big) -  \trace\big(\Sigma^{1/2} R(z) \Phi(\Sigma) R(z) \Sigma^{1/2}\big) \\
    & = 
    \trace\big( (R_i(-\lambda) - R(-\lambda))\Phi(\Sigma)R_i(-\lambda) \Sigma \big)
    + 
    \trace\big(\Sigma R(-\lambda) \Phi(\Sigma)( R_{i}(-\lambda) - R(-\lambda)) \big)
\end{align*}
Using \eqref{equ:ShermanMorr} and letting $A = \Phi(\Sigma)R_i(-\lambda) \Sigma$ we then get 
\begin{align}
\label{equ:BoundLeaveOneOut}
    \frac{1}{n} 
    |\trace\big( (R_i(-\lambda) - R(-\lambda))A \big)|
    & = 
    \Big|  
    \frac{1}{n^2} 
    \frac{ Z_i \Sigma^{1/2}R_i(-\lambda) A R_i(-\lambda) \Sigma^{1/2}Z_i^{\top}}{1 + \frac{1}{n} Z_i \Sigma^{1/2}R_i(-\lambda)\Sigma^{1/2}Z_i^{\top}}
    \Big|\\
    & \leq 
    \frac{\|A\|_2}{n} 
    \Big| 
    \frac{1}{n} \frac{ Z_i \Sigma^{1/2}R_i(-\lambda) R_i(-\lambda) \Sigma^{1/2}Z_i^{\top}}{1 + \frac{1}{n} Z_i \Sigma^{1/2}R_i(-\lambda)\Sigma^{1/2}Z_i^{\top}}
    \Big|
    \nonumber 
    \\
    & \leq 
    \frac{\|A\|_2}{n} 
    \sup_{x} 
    \Big|
    \frac{ x R_{i}(-\lambda)^2 x^{\top}}{1 + x R_{i}(-\lambda) x^{\top}}
    \Big|
    \nonumber
    \\
    & \leq 
    \frac{\|A\|_2}{n}
    \sup_{x} 
    \Big|
    \frac{ x R_{i}(-\lambda)^2 x^{\top}}{x R_{i}(-\lambda) x^{\top}}
    \Big|
    \nonumber
    \\ 
    & \leq \frac{\|A\|_2}{n} \|R_i(-\lambda)\|_2
    \nonumber
    \\
    & \leq \frac{\|A\|_2}{\lambda n}
    \nonumber
    \\
    & \leq \frac{ \|\Phi(\Sigma)\|_2 \|\Sigma\|_2 }{\lambda^2 n}
    \nonumber
\end{align}

An identical calculation with $A = \Phi(\Sigma)R(-\lambda) \Sigma$ yields the same bound.  This then yields with the lower bound $(1 + \trace(\Sigma^{1/2} R(-\lambda) \Sigma^{1/2})) \geq 1$ 
\begin{align*}
    |\delta_1| \leq 2 \frac{n}{d}  \frac{ \|\Phi(\Sigma)\|_2 \|\Sigma\|_2 }{\lambda^2 n}
\end{align*}
and as such $\delta_1$ goes to zero as $n,d\rightarrow \infty$ so that $d/n \rightarrow  \gamma$.

Now consider the term $\delta_2$. Note that for two positive numbers $a,b \geq 0$ we have 
\begin{align*}
    \frac{1}{(1+a)^{2}} - \frac{1}{(1+b)^{2}} 
    & = 
     \frac{(1 + b)^2 - (1+a)^2}{(1+a)^2 (1+b)^2}\\
    & = 
    \frac{b^2 + 2b - a^2 - 2a}{(1+a)^2 (1+b)^2}\\
    & =  
    \frac{ b(b-a) + a(b-a) + 2(b-a)}{(1+a)^2 (1+b)^2}\\
    & = 
    (b-a) \frac{ (b+1) + (a+1)}{(1+a)^2 (1+b)^2}\\
    & = 
    (b-a)\big( \frac{1}{ (1+a)^2(1+b) } + \frac{1}{ (1+a)(1+b)^2 }\big)
\end{align*} 
and as such $| (1+a)^{-2} - (1+b)^{-2}| \leq 2|b-a|$. Using this with $a = \frac{1}{n} \trace(\Sigma^{1/2}R_i(-\lambda)\Sigma^{1/2})$ and $b = \frac{1}{n} \trace(\Sigma^{1/2} R(-\lambda)\Sigma^{1/2} )$ whom are both non-negative, allows us to upper bound 
\begin{align*}
    \Big| 
    \frac{ 1 }{\big( 1 + \frac{1}{n} \trace(\Sigma R_i(-\lambda) ) \big)^2 }
    - 
    \frac{ 1 }{\big( 1 + \frac{1}{n} \trace(\Sigma R(-\lambda) ) \big)^2 }
    \Big|
    & \leq 
    2 \frac{1}{n}\big| \trace(\Sigma R_i(-\lambda) ) -  \trace(\Sigma R(-\lambda) )\big|\\
    & \leq 
    \frac{2 \|\Sigma\|}{\lambda n}
\end{align*}
where for the final inequality we used the argument \eqref{equ:BoundLeaveOneOut} with $A = \Sigma$. Now, since the eigenvalues in the following trace are non-negative we can upper bound 
\begin{align}
    \frac{1}{d} \big| \trace\big(\Sigma^{1/2} R_i(-\lambda) \Phi(\Sigma) R_i(-\lambda) \Sigma^{1/2}\big)\big|
    & \leq 
    \|\Sigma^{1/2} R_i(-\lambda) \Phi(\Sigma) R_i(-\lambda) \Sigma^{1/2}\|_2
    \nonumber 
    \\
    & \leq 
    \|\Sigma^{1/2}\|_2^2 \|\Phi(\Sigma)\|_2 \|R_i(-\lambda)\|_2^2
    \nonumber 
    \\
    & \leq 
    \frac{\|\Sigma^{1/2}\|_2^2 \|\Phi(\Sigma)\|_2 }{\lambda^2}
    \nonumber 
    \\
    & = 
    \frac{\|\Sigma\|_2 \|\Phi(\Sigma)\|_2 }{\lambda^2} 
    \label{equ:MaxEigenVal}
\end{align}
Combining these two facts yields the upper bound 
\begin{align*}
    |\delta_2| \leq 
    \frac{2 \|\Sigma\|^2_2 \|\Phi(\Sigma)\|_2}{\lambda^3 n}
\end{align*}
which goes to zero as $n \rightarrow \infty$.

We now proceed to bound $\delta_3$ and $\delta_4$. 
With the bound on the trace \eqref{equ:MaxEigenVal} as well as using the bound $|(1+a)^{-2} - (1+b)^{-2}| \leq 2 |b-a|$ we arrive at the bound for $\delta_3$
\begin{align*}
    & |\delta_3|
    \leq 
    2 \frac{\|\Sigma\|_2 \|\Phi(\Sigma)\|_{2}}{\lambda^2}\\
    & \quad \times \max_{1 \leq i \leq n} 
    \Big|Z_{i} \Sigma^{1/2}R_i(z)\Phi(\Sigma)R_i(z)\Sigma^{1/2}Z_i^{\top} - \trace\big(\Sigma^{1/2} R_i(z) \Phi(\Sigma) R_i(z) \Sigma^{1/2}\big)\Big|.
\end{align*}
Meanwhile using that $1 + \frac{1}{n} Z_i \Sigma^{1/2} R_{i}(-\lambda) \Sigma^{1/2} Z_{i}^{\top} \geq 1$ we arrive at the bound for $\delta_4$ 
\begin{align*}
    |\delta_4|
    \leq 
    \max_{1 \leq i \leq n} 
    \Big|Z_{i} \Sigma^{1/2}R_i(z)\Phi(\Sigma)R_i(z)\Sigma^{1/2}Z_i^{\top} - \trace\big(\Sigma^{1/2} R_i(z) \Phi(\Sigma) R_i(z) \Sigma^{1/2}\big)\Big|
\end{align*}
We now show that $\max_{1 \leq i \leq n} \Big|Z_{i} \Sigma^{1/2}R_i(z)\Phi(\Sigma)R_i(z)\Sigma^{1/2}Z_i^{\top} - \trace\big(\Sigma^{1/2} R_i(z) \Phi(\Sigma) R_i(z) \Sigma^{1/2}\big)\Big|$ converges to zero almost surely. 
Observe since we have the upper bound on the largest eigenvalue we have using Lemma \ref{lem:Conc} as well as union bound for $1 \leq i \leq n$ we have for $0 < t < \frac{\|\Sigma\|_2 \|\Phi(\Sigma)\|_2 }{\lambda^2}  $
\begin{align}
\nonumber
    & \P\Big( \max_{1 \leq i \leq n}
    \frac{1}{d} 
    \Big|
    Z_{i} \Sigma^{1/2}R_i(z)\Phi(\Sigma)R_i(z)\Sigma^{1/2}Z_i^{\top} 
    - 
    \trace\big(\Sigma^{1/2} R_i(z) \Phi(\Sigma) R_i(z) \Sigma^{1/2}\big)
    \Big|
    \geq t 
    \Big)\\
    & \leq 
    2 
    \exp\Big\{ - \frac{ d t^2 \lambda^4 }{6 \|\Sigma\|_2^2 \|\Phi(\Sigma)\|^2 } + \log(n)\Big\}
    \label{equ:ConcApplied}
\end{align}
Let $V_{n,d} := \max_{1 \leq i \leq n}
    \frac{1}{d} 
    \Big|
    Z_{i} \Sigma^{1/2}R_i(z)\Phi(\Sigma)R_i(z)\Sigma^{1/2}Z_i^{\top} 
    - 
    \trace\big(\Sigma^{1/2} R_i(z) \Phi(\Sigma) R_i(z) \Sigma^{1/2}\big)
    \Big|
    $ and, for any $t > 0$, let $E_{n,d} (t)$ denote the event $\{ V_{n,d} \geq t \}$ where $d = d_n$.
    Then, if $d = d_n$ satisfies $d_n / n \to \infty$, $\P (E_{n,d}) \leq 2 n \exp\Big\{ - \frac{ d t^2 \lambda^4 }{6 \|\Sigma\|_2^2 \|\Phi(\Sigma)\|^2 } \Big\} \leq 2 n \exp\Big\{ - \frac{\gamma n t^2 \lambda^4 }{12 \|\Sigma\|_2^2 \|\Phi(\Sigma)\|^2 } \Big\}$ where the last inequality for $n$ large enough that $d/n \geq \gamma / 2$.
    Hence,
    \begin{equation*}
      \sum_{n=1}^{\infty} \P ( E_{n,d_n} (t)) < + \infty
    \end{equation*}
    so that, by the Borel-Cantelli lemma, almost surely, $V_{n,d} \geq t$ only holds for a finite number of values of $n$.
    This implies that, almost surely, $\limsup_{n \to \infty} V_{n,d} \leq t$.
    Note that this is true for every $t > 0$; letting $t=1/k$ and taking a union bound over $k \geq 1$ shows that $\limsup_{n \to \infty} V_{n,d} = 0$ almost surely, \ie $V_{n,d} \to 0$ almost surely.

\end{document}